\newcommand{\m}{\mathfrak{m} }
\newcommand{\ann}{\operatorname{ann}}
\newcommand{\embdim}{\operatorname{embdim}}
\newcommand{\cha}{\operatorname{char}}
\newcommand{\spa}{\operatorname{Span}}
\newcommand{\soc}{\operatorname{Soc}}
\newcommand{\rank}{\operatorname{rank}}
\newcommand{\rad}{\operatorname{rad}}
\theoremstyle{plain}
\newtheorem{theorem}{Theorem}[section]
\newtheorem{corollary}[theorem]{Corollary}
\newtheorem{lemma}[theorem]{Lemma}
\newtheorem{proposition}[theorem]{Proposition}
\newtheorem{question}[theorem]{Question}
\theoremstyle{definition}
\newtheorem{definition}[theorem]{Definition}
\newtheorem{remark}[theorem]{Remark}
\newtheorem{observation}[theorem]{Observation}
\newtheorem{notations}[theorem]{Notations}
\theoremstyle{remark}
	\def\MR#1{}
\begin{document}
	
	\title[Compressed   zero-divisor graphs of Artinian rings ]{On the problem of the finiteness of the clique number of compressed   zero-divisor graphs of Artinian rings }
	\author{Ganesh~S.~Kadu}
	\address{Department of Mathematics, Savitribai Phule Pune University, Pune 411 007, India}
	\email{ganeshkadu@gmail.com, gskadu@unipune.ac.in}
	\thanks{The author thanks  DST-SERB for financial assistance under the project ECR/2017/00790.}
	\date{\today}
	


\subjclass[2020]{Primary 13A70, 05E40; Secondary 05C69, 13E39.}
	
	\begin{abstract} Let $R$ be an Artinian ring and let $\Gamma_E(R)$ be the compressed zero-divisor graph associated to $R.$ The question of when the clique number  $\omega(\Gamma_E(R)) < \infty$ was raised by J. Coykendall, S. Sather-Wagstaff, L. Sheppardson, and S. Spiroff, see \cite[Section 5]{Coy}. They proved that if  $\ell(R) \leq 4 $ then  $\omega(\Gamma_E(R)) < \infty.$ When $\ell(R)=6$, they gave an example of a local ring $R$ where  $\omega(\Gamma_E(R)) = \infty$ is possible by using the trivial extension of an Artinian local ring by its dualizing module. The question of what happens when $\ell(R) =5$ was stated as an open problem. We show that if $\ell(R)=5$  then $\omega(\Gamma_E(R)) < \infty.$ We first reduce the problem to the case of a local ring $(R, \m, k).$ We then enumerate all possible Hilbert functions of $R$ and show that the $k$-vector space $\m/\m^2$ admits a symmetric bilinear form in some cases of the Hilbert function. This allows us to relate the orthogonality in the bilinear space $\m/\m^2$  with the structure of zero-divisors in $R.$ For instance, in the case when $\m^2$ is principal and $\m^3=0$, we show that $R$ is Gorenstein if and only if the symmetric bilinear form on $\m/\m^2$ is non-degenerate. Moreover, in the case when $\ell(R) =4,$  our techniques also yield a simpler and shorter proof of the finiteness of  $\omega(\Gamma_E(R))$ avoiding, for instance, Cohen structure theorem.
	\end{abstract}
	\maketitle
	\section{ Introduction}
	Let $R$ be a commutative ring with unity.  In \cite{IB}, I. Beck associated to a ring $R,$ a simple graph with the vertex set consisting of all elements of  $R$, and the two vertices  $a, b \in R$ are adjacent whenever $ab=0.$ This was later modified by D. F. Anderson and P. S. Livingston in \cite{AL} to include only the nonzero zero-divisors of $R$ as its vertex set and two vertices are adjacent if their product is zero. This graph is denoted by $\Gamma(R).$ These graphs are an interesting class of graphs, and their study offers a rich interplay between the ring theoretic properties of $R$ and the graph theoretic properties of $\Gamma(R),$ as evidenced by Beck's paper \cite{IB} and the series of papers following it, see  \cite{AN},  \cite{AL1}, \cite{AL2}, \cite{AL}, \cite{Coy}.  The zero-divisor graphs possess good symmetry properties as their graph automorphism groups tend to be quite large in general, see \cite[Theorem 3.2]{AL} and \cite[3.9]{SM}. Furthermore, the zero-divisor graphs often also have the property that their chromatic number  $\chi(\Gamma(R))$ and the clique number $\omega(\Gamma(R))$ are equal,  see for example \cite[Theorems 6.10 and 6.11]{IB}, \cite[Corollary 3.3]{AN}.\\  
	\indent  A compressed version of $\Gamma(R)$ was introduced and studied by S. Mulay in \cite{SM} and is denoted by $\Gamma_E(R).$ This graph $\Gamma_E(R)$, known as the \emph{compressed zero-divisor graph} of a ring $R,$   is obtained from $\Gamma(R)$ by letting the two vertices $a, b \in V(\Gamma(R))$ as equivalent if $\ann(a) =\ann(b).$ So the vertex set of $\Gamma_E(R)$ is the set of equivalence classes of vertices of $\Gamma(R)$ and the adjacency is inherited from that in $\Gamma(R),$ i.e., an equivalence class $[a]$ is adjacent to $[b]$ if $ab=0$ or equivalently $[a][b] =0.$ Observe that $\ann(a)$ is essentially the neighbourhood of $a$ in $\Gamma(R).$ Hence this equivalence relation compresses $\Gamma(R)$ by identifying all the vertices in each equivalence class, i.e., vertices having the same neighborhood in $\Gamma(R)$ are identified in $\Gamma_E(R).$ We, therefore, have a surjective graph map,
	\[ \psi : V(\Gamma(R)) \longrightarrow V(\Gamma_E(R)) \]
	given by $\psi(a) = [a].$ The map $\psi$ has the property that if $a$ and $b$ are adjacent in $\Gamma(R),$ then  $\psi(a)$ is adjacent to $\psi(b)$ or $\psi(a) =\psi(b).$ It is an interesting problem to study $\Gamma(R)$ using the more simplified  $\Gamma_E(R).$ 
	For instance, the graph automorphism group of $\Gamma(R)$ can be determined from the automorphism group of $\Gamma_E(R),$ see \cite[3.9]{SM}.  For this reason, it is often helpful to look at the more simplified compressed graph $\Gamma_E(R)$ than the usual $\Gamma(R).$ The graph $\Gamma_E(R)$ was further studied by D.F. Anderson and J. LaGrange \cite{AL1} (also see \cite{AL2}), S. Spiroff and C. Wickham \cite{SW}, M. Axtell, N. Baeth, and J. Stickles \cite{ABS}. For a detailed and nice survey of the zero-divisor graphs with a special emphasis to $\Gamma_E(R),$ see J. Coykendall et al. \cite{Coy}.
	
	Let $\ell(R)$ denote the length of ring $R$, i.e., the largest length of any of its chains of ideals.  From now on, we assume that $R$ is an Artinian ring, i.e., it has a finite length. The question of when the clique number of $\Gamma_E(R))$ is finite, i.e., $\omega(\Gamma_E(R)) < \infty$ was posed by J. Coykendall, S. Sather-Wagstaff, L. Sheppardson, and S. Spiroff in \cite[Section 5]{Coy}.  They proved that if $\ell(R) \leq 3, $ then  $\Gamma_E(R)$ is a finite graph and hence  $\omega(\Gamma_E(R)) < \infty.$ When $\ell(R)=4,$ they proved that $\omega(\Gamma_E(R)) < \infty,$ noting however that $\Gamma_E(R)$ can be an infinite graph. To do this, they reduce to the local case and work with the explicit presentations of Artinian local rings as quotients of power series rings by using the Cohen structure theorem. When $\ell(R)=6,$ they give an example of a ring $R,$ where  $\omega(\Gamma_E(R)) = \infty$ is possible. To construct such an example in length $6,$ they use the trivial extension of an Artinian local ring $R$ by its dualizing module, viz. the injective hull $E_R(k)$ of its residue field $k.$ 
	The question of what happens when $\ell(R) =5$ was stated as an open problem, J. Coykendall et al. see \cite[Question 5.9]{Coy}. More precisely,\vspace{0.1 cm} \newline 
	{\bf Problem: } If $R$ is Artinian ring with $\ell(R) =5,$ then is $\omega(\Gamma_E(R)) < \infty?$
	
	\vspace{0.1 cm} 
	
	In this paper we resolve this problem and show that if $R$ is an  Artinian ring with $\ell(R)=5,$  then $\omega(\Gamma_E(R)) < \infty.$ In the local case, we, in fact, show that $\omega(\Gamma_E(R)) \leq 5- \dim_k \soc R.$ This shows, in particular, that the example given by J. Coykendall et al. of length $6$ Artinian ring where $\omega(\Gamma_E(R)) = \infty$ is indeed a minimal one. Moreover, in the case when $\ell(R) =4,$ our methods also yield a short and easy proof of   $\omega(\Gamma_E(R)) < \infty.$ 
	
	We first reduce the problem to the case of a local ring $(R, \m, k)$ and then work by enumerating all possible Hilbert functions in the length five case. We define a symmetric bilinear form $\phi$ on the $k$-vector space $\m/\m^2$ in some of the cases of the Hilbert function of $R.$ There are two results that are key to our analysis of the graph $\Gamma_E(R),$ namely, Proposition \ref{bilinear} and Lemma \ref{socle}(ii). The first of these, Proposition \ref{bilinear}, allows us to relate zero-divisors in $R$ with the orthogonality of vectors in the bilinear space $(\m/\m^2, \phi)$ in the case when $\m^2$ is principal and $\m^3=0.$  We also observe that the radical $(\m/\m^2)^\perp $ of the bilinear space  $(\m/\m^2, \phi)$ can be completely described in terms of $\soc R.$  It is interesting to note, in this case, that this symmetric bilinear form $\phi$ is non-degenerate if and only if $R$ is Gorenstein local, see Proposition \ref{bilinear}(iii). The second of these results, Lemma \ref{socle}(ii), is an observation that enables us to map distinct vertices in $\Gamma_E(R)$ to distinct one-dimensional subspaces in $\m/\m^2.$
	As the first application of  Lemma \ref{socle}(ii), we give an easier and shorter proof of the finiteness of  $\omega(\Gamma_E(R))$ in the case when $\ell(R) =4.$ Note that the original proof (see \cite[Proposition 5.8(v)(c)]{Coy}) of the case $\ell(R)=4,$ is an intricate case by case analysis and uses Cohen's structure theory, in some specific cases, to obtain presentations of the rings as quotients of power series rings. \vspace{0.05 cm} \newline  
	\indent Another important ingredient of our investigation is the use of symmetric bilinear forms on finite-dimensional vector spaces.  In Section $2$ on bilinear forms, we recall the standard notions from the orthogonal geometry of symmetric bilinear spaces over arbitrary fields. We also prove a technical result that, in essence, forbids a vector space of dimension $3$ from having an orthogonal set of size bigger than $3.$ To our surprise, the proof of this result is a little longer than what one might expect generally. We prove this result as Lemma \ref{ortho} and is crucially used in Proposition \ref{bound}. It is worth noting that one can have an orthogonal subset in bilinear space $V$ of size greater than $\dim V$ even if the symmetric bilinear form on $V$ is non-degenerate. This is pointed out in the discussion that precedes Lemma \ref{ortho}. We expect the general version of Lemma \ref{ortho} to hold for higher dimensional bilinear spaces and we state the higher dimensional formulation as Question \ref{question}. \vspace{0.05 cm} \newline  
	\indent  We then set out to prove the finiteness of $\omega (\Gamma_E(R)).$  This is done by first enumerating all possible Hilbert functions of a length five local ring and treating each case separately in the Propositions \ref{bound}, \ref{bound1}, and \ref{bound2}. One of our main ideas is to start with a clique in $\Gamma_E(R)$  and map its vertices to one-dimensional subspaces in $\m/\m^2.$ Since adjacency of the vertices in $\Gamma_E(R)$ corresponds to the orthogonality of the vectors in the bilinear space $\m/\m^2,$ we see that a clique in $\Gamma_E(R)$ gives rise to an orthogonal set in $\m/\m^2.$ We then use classical orthogonal geometry of symmetric bilinear spaces to show that the cliques of large sizes are not possible. The most interesting case is Proposition \ref{bound}, where we make use of the fact that $\soc R $ is one-dimensional (Gorenstein local ring) is equivalent to the non-degeneracy of the bilinear form $\phi.$ In the case when  $\dim_k \soc R >1$  we go modulo the $\rad(\m/\m^2),$  which is determined by $\soc R,$ to reduce to the non-degenerate case. We finally put all the results together to prove the main result, viz. Theorem \ref{main}.  
	\begin{notations}
		We mention some notations and abbreviations used throughout the paper. Let $\mathbb N$ denote the set of natural numbers and $\mathbb N_{ \geq 0} = \mathbb N \cup \{0\}.$  If $V$ is a vector space over field $k$ and $ v \in V,$ then a subspace generated by $v$ is denoted by $\langle v \rangle .$ The dimension of vector space $V$ over field $k$ is denoted by $\dim_k V.$ If $R$ is a ring and $ a \in R,$ then a principal ideal generated by $a$ is denoted by $(a).$ Abbreviation PIR stands for a principal ideal ring. The length of a ring $R$ is denoted by $\ell(R).$ We let $ \omega(G)$ denote the clique number of a graph $G,$, i.e., the size of a clique of the largest size in $G.$ 
	\end{notations}
	
	\section{bilinear forms}
	
	In this section, we recall some of the basic notions and results that we need from the theory of symmetric bilinear forms on finite dimensional vector spaces. We also prove a technical result relating to orthogonal geometry of bilinear spaces that will be used later in Section $3.$ \newline 
	\indent Let $V$ be a finite dimensional vector space over a field $k.$ Let $\dim V=n.$ A \textit{bilinear form}   on $V$ is a map $\phi : V \times V \rightarrow k$ that is linear in both the variables. A \textit{symmetric bilinear form} on $V$ is a bilinear form $\phi$ on $V$ that satisfies the condition $\phi(v, w) =\phi(w,v)$ for all $v, w \in V.$ Suppose now that vector space $V$ admits a symmetric bilinear form $\phi.$ We say that vectors $v$ and $w$ in $V$ are \textit{orthogonal} w.r.t. $\phi$ if $\phi(v, w) =0.$ A vector $v \in V$ is called \textit{self-orthogonal} (or \textit{isotropic}) w.r.t. $\phi$ if $\phi(v, v) =0.$ Given a subspace $W$ of $V,$ we have the orthogonal complement $W^\perp$ of $W$ in $V$  given by, $$W^\perp= \{ v \in V \mid \phi(v, w) =0, ~ \forall ~ w \in W\}.$$ \textit{Radical} of $V,$ denoted by $V^\perp,$ is the orthogonal complement of the whole space $V.$  Radical of $V$ is also sometimes known as the null space of $V$ or the degeneration space of $V$ etc. Symmetric bilinear form $\phi$ is called \textit{non-degenerate} if $V^\perp =0.$ If $\mathcal B= \{ v_1, v_2, \ldots, v_n\}$ is a basis of $V,$ then we have  $n \times n$ matrix $ A=[\phi(v_i, v_j)]$ representing the bilinear form $\phi.$ Note that $\phi$ is symmetric if and only if $A$ is symmetric.  \\\
	 \indent Some good references where these results can be found are \cite{EA}, \cite{LG}. An excellent reference is expository notes on Bilinear Forms by K. Conrad to be found at his homepage. In the following proposition, we collect together some basic facts on bilinear forms which we need later. It is easy to prove (i).  For the proof of (ii) see \cite[Theorem 4.2]{LG} and for (iii) see \cite[Theorem 11.7]{RS}. 
	\begin{theorem}\label{symbi}
		Let $V$ be a finite dimensional vector space over a field $k$ of dimension $n$ equipped with a symmetric bilinear form $\phi$. Then we have,
		
		\begin{enumerate}
			\item [\rm (i)] $\phi $ is non-degenerate if and only if matrix representation of $\phi$ in any basis $\mathcal B$ is non-singular.
			\item [\rm (ii)] If $\cha k \neq 2$ then there exists an orthogonal basis of $V$ corresponding to $\phi.$
			\item [\rm (iii)]  $\phi$ induces a non-degenerate symmetric bilinear form on $V/V^\perp.$ Moreover writing $ V = V^\perp \oplus W$ for a subspace $W$ complementary to $V^\perp,$ we get $W$ is non-degenerate space  w.r.t. restricted form $\phi|_W$ on $W.$ 		 
		\end{enumerate}
	\end{theorem}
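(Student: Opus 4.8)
The plan is to prove the three items by the standard short arguments from orthogonal geometry, working with a coordinate matrix for (i) and with induction/quotients for (ii) and (iii). For (i), I would fix a basis $\B=\{v_1,\dots,v_n\}$ and set $A=[\phi(v_i,v_j)]$. Writing $v=\sum x_iv_i$, $w=\sum y_jv_j$ and identifying $v,w$ with their coordinate column vectors $\bx,\by$, bilinearity gives $\phi(v,w)=\bx^{\top}A\by$. Hence $v\in V^\perp$ exactly when $\bx^{\top}A\by=0$ for all $\by$, i.e. when $A^{\top}\bx=0$; since $A$ is symmetric this says $\bx\in\ker A$. Therefore $V^\perp=0$ iff $\ker A=0$ iff $A$ is non-singular. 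Finally a change of basis replaces $A$ by $P^{\top}AP$ with $P$ invertible, so non-singularity is independent of the chosen basis, which justifies the phrase ``any basis $\B$.''

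For (ii) I would induct on $n=\dim V$, the case $n\le 1$ being immediate. If $\phi\equiv 0$ every basis is orthogonal, so assume $\phi(v,w)\neq 0$ for some $v,w$. I first produce a non-isotropic vector: if $\phi(v,v)\neq 0$ or $\phi(w,w)\neq 0$ we are done, and otherwise $\phi(v+w,v+w)=\phi(v,v)+2\phi(v,w)+\phi(w,w)=2\phi(v,w)\neq 0$ because $\cha k\neq 2$. Choosing $v_1$ with $\phi(v_1,v_1)\neq 0$, for any $x\in V$ the vector $x-\tfrac{\phi(x,v_1)}{\phi(v_1,v_1)}v_1$ is orthogonal to $v_1$, whence $V=\langle v_1\rangle\oplus\langle v_1\rangle^\perp$; applying the inductive hypothesis to $\langle v_1\rangle^\perp$ produces an orthogonal basis $v_2,\dots,v_n$ of it, and $v_1,\dots,v_n$ is the required orthogonal basis of $V$. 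This characteristic hypothesis is the one genuinely delicate point of the whole proposition: over a field of characteristic $2$ an alternating form has no non-isotropic vector, and (ii) really does fail there, so the argument cannot be pushed through without it.

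For (iii) I would set $\bar\phi(\bar v,\bar w)=\phi(v,w)$ on $V/V^\perp$. This is well defined because replacing $v$ by $v+z$ with $z\in V^\perp$ leaves $\phi(v,w)$ unchanged, and likewise in the second slot; symmetry is inherited from $\phi$. If $\bar v$ lies in the radical of $\bar\phi$ then $\phi(v,w)=0$ for every $w\in V$, so $v\in V^\perp$ and $\bar v=0$, proving $\bar\phi$ non-degenerate. For the splitting statement, write $V=V^\perp\oplus W$: every $w\in W$ is already orthogonal to all of $V^\perp$ since $V^\perp$ is the radical of $\phi$, so if in addition $\phi(w,w')=0$ for all $w'\in W$ then $\phi(w,x)=0$ for all $x\in V$, forcing $w\in V^\perp\cap W=0$; hence $\phi|_W$ is non-degenerate. (Equivalently, the projection $W\to V/V^\perp$ is an isometry onto the non-degenerate space just constructed, which gives the same conclusion.) Apart from the characteristic-$2$ caveat in (ii), no step presents a real obstacle; the routine verifications are the well-definedness of $\bar\phi$ and the bookkeeping with coordinates in (i).
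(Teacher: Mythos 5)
Your proof is correct and is the standard textbook argument; note that the paper itself does not prove this theorem but simply cites references (remarking that (i) is easy and pointing to \cite{LG} and \cite{RS} for (ii) and (iii)), and those sources give essentially the same arguments you do. All three parts check out, including the key uses of $\cha k\neq 2$ to produce a non-isotropic vector in (ii) and of the congruence $P^{\top}AP$ to handle the ``any basis'' clause in (i).
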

	
	\indent Let $V$ be an $n$-dimensional  vector space  over a field $k,$ equipped with non-degenerate symmetric bilinear form $\phi.$ It is well known if $k=\mathbb R$ or $\mathbb C$ and $\phi$ is positive definite bilinear form (inner product) on $V,$  then $V$ can not have an orthogonal set of distinct vectors $\{v_1, v_2, \ldots, v_m \}$ with $m > n.$ It is interesting to note that, over arbitrary fields, this result fails even for vector spaces equipped with non-degenerate symmetric bilinear form $\phi$ on $V,$ i.e., $V$ can have more than $\dim V$ many distinct orthogonal vectors w.r.t. $\phi.$ This happens primarily due to the presence of self-orthogonal (isotropic) vectors in $V$ even when $\phi$ is non-degenerate. One way of constructing such orthogonal sets is to start with one self-orthogonal  vector $v \in V.$  For any $a_1, a_2, \ldots, a_m$  distinct $m$ scalars in the field $k,$  consider the set $ \{a_1v, a_2v, \ldots, a_mv \}.$ This is an orthogonal set in $V$ of size $m>n.$ \\
	\indent In the following result, we show that in dimension $3,$ in some sense, this is the only reason for the occurrence  of  orthogonal sets of size bigger than $\dim V.$ We expect this result to hold more generally in higher dimensions and we propose the higher dimensional formulation as  Question \ref{question}.  We prove the result  in the case of dimension $3.$ We note that this is the case in which it is crucially used in the Proposition \ref{bound}. To our surprise, the proof of this result is a little longer than what one might expect generally. 
	\begin{lemma}\label{ortho}
		Let $V$ be a  vector space over a field $k$ with $\dim V=3$ equipped with non-degenerate symmetric bilinear form $\phi.$  If $\{v_1, v_2, v_3, v_4 \}$ is an orthogonal set of non-zero vectors w.r.t. $\phi$ then, $v_i= \alpha v_j$ for some $ i \neq j$ and $\alpha \in k.$ 
	\end{lemma}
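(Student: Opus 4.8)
The plan is to reduce everything to one standard fact from the orthogonal geometry of non-degenerate spaces, namely that for a non-degenerate symmetric bilinear form $\phi$ on a finite-dimensional space $V$ and \emph{any} subspace $W\subseteq V$ one has $\dim_k W^\perp=\dim_k V-\dim_k W$; this is contained in the references recalled before Theorem \ref{symbi}.

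I would then split into two cases according to whether $v_1$ and $v_2$ are linearly dependent. If they are, then since both are non-zero we already have $v_1=\alpha v_2$ for some $\alpha\in k$ and we are done. So assume $v_1,v_2$ are linearly independent and let $U$ be the subspace spanned by $v_1$ and $v_2$, so that $\dim_k U=2$ and hence $\dim_k U^\perp=3-2=1$ by the fact above. Since $\phi(v_3,v_1)=\phi(v_3,v_2)=0$ — and likewise for $v_4$ — both $v_3$ and $v_4$ lie in $U^\perp$; as $U^\perp$ is one-dimensional and $v_3,v_4$ are non-zero, they are proportional, i.e.\ $v_3=\alpha v_4$ for some $\alpha\in k$, which is again the asserted conclusion (here $i=3$, $j=4$).

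I do not expect a serious obstacle: once the dimension count for $U^\perp$ is in hand the argument is immediate, and no case analysis on $\cha k$ or on whether the $v_i$ are isotropic is needed. The one point to be careful about — and, I suspect, the reason a fully self-contained proof can look longer than one might guess — is that the identity $\dim_k W^\perp=\dim_k V-\dim_k W$ genuinely requires $\phi$ to be non-degenerate on all of $V$ and is false for degenerate forms; if one wished to avoid invoking it, the alternative is to extend $\{v_1,v_2\}$ to a basis of $V$, write down the symmetric invertible Gram matrix, and verify by a direct computation that the common orthogonal space of $v_1$ and $v_2$ is one-dimensional, which is the more computational route.
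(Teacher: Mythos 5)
Your proof is correct, and it is genuinely different from (and considerably shorter than) the one in the paper. You reduce everything to the standard dimension formula $\dim_k W^\perp = \dim_k V - \dim_k W$, valid for an arbitrary subspace $W$ of a non-degenerate space $V$: once two of the four vectors, say $v_1,v_2$, are linearly independent, the line $U^\perp$ with $U=\langle v_1,v_2\rangle$ must contain both $v_3$ and $v_4$, forcing them to be proportional. The paper instead argues by cases on $d=\dim_k\spa\{v_1,v_2,v_3,v_4\}$: for $d=3$ it shows the fourth vector would land in the radical, and in the hardest case $d=2$ it exhibits an explicit vector $w=-bv_3+av_4$ in the radical and then derives a contradiction from a singular Gram matrix built out of two isotropic basis vectors. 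Your route buys brevity and conceptual clarity, at the price of invoking the dimension formula as a black box; the paper's route is self-contained (it only uses $V^\perp=0$ and the singularity criterion of Theorem \ref{symbi}(i)) but is a longer case analysis. You are also right about the one delicate point: the identity $\dim_k W^\perp=\dim_k V-\dim_k W$ uses non-degeneracy of $\phi$ on all of $V$ (via surjectivity of $V\to W^*$, $v\mapsto\phi(v,\cdot)|_W$) and needs no hypothesis on $\phi|_W$, which is essential here since $U$ may well be a degenerate subspace. Finally, note that your argument exploits the specific numerology $\dim V=3$ (two independent vectors already leave only a line as their common orthogonal complement), so it does not immediately settle the higher-dimensional Question \ref{question}.
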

	\begin{proof} Let $W= \spa \{v_1, v_2, v_3, v_4 \}.$ Suppose first that  $W = V.$ Since $\dim V=3,$ there exists a set of three vectors among $v_1, v_2, v_3, v_4$ that forms a basis of $V,$ say $\{v_1, v_2, v_3\}.$ Hence $ v_4 = \alpha_1 v_1+ \alpha_2 v_2 + \alpha_3 v_3$ for some $ \alpha_1, \alpha_2, \alpha_3 \in k.$ Taking bilinear product with $v_4,$ we find that $\phi(v_4, v_4)=0.$ Thus $\phi(v_4, v_i) =0$ for $1 \leq i \leq 4,$ showing that $ v_4 \in V^{\perp}.$ Since $\phi$ is non-degenerate, we have  $V^{\perp}=0$ and hence $v_4=0.$ This is not possible, as all $v_i$'s are non-zero vectors. \vspace{0.1 cm} \newline 
		\indent So, assume now that $W$ is a proper subspace of $V.$ If $\dim W =1,$ then the conclusion is immediate. If $\dim W =2,$ then two of the four vectors $v_1, v_2, v_3, v_4$ forms a basis of  $W,$ say $\{v_1, v_2 \}.$ Thus $v_3 =\alpha_1 v_1+ \alpha_2 v_2$ and $v_4= \beta_1 v_1 + \beta_2 v_2.$ We use these expressions for $v_3$ and $v_4$ to compute $\phi(v_3, v_3)$ and $\phi(v_4, v_4).$ Using the orthogonality of $\{v_1, v_2, v_3, v_4 \},$ we find that  $\phi(v_3, v_3)=0$ and $\phi(v_4, v_4)=0.$ We now claim that $ \langle  v_3 \rangle  = \langle  v_4\rangle .$ If $ \langle v_3\rangle  \neq \langle v_4\rangle, $ then $W$ has $\{v_3, v_4\}$ as its basis. Choose $v \in V \setminus  W.$ Clearly, $\{ v_3, v_4, v\}$ is a basis of $V.$ Let $ \phi(v_3, v) = a$ and $\phi(v_4, v) = b.$ Consider the vector $ w= -b v_3 + a v_4$ in $W.$ As noted above,  $\phi(v_3, v_3)= \phi(v_4, v_4)=0$ and hence  we obtain  $\phi(w, v_3) = \phi(w, v_4)=0.$ Observe that
		$$  \phi(w, v) = -b\phi(v_3, v) + a \phi(v_4, v) =-ba+ab =0.$$ Since $\{ v_3, v_4, v\}$ is a basis of $V,$ we get $w \in V^{\perp}=0.$ So $ w=-b v_3 + a v_4=0.$ Since $ \{ v_3, v_4\}$ is linearly independent we have  $a=b=0.$ Now, as  $ \phi(v_3, v) = a$ and $\phi(v_4, v) = b,$ we get that $\{ v_3, v_4, v\}$ is an orthogonal basis of $V.$ Note here that $v_3$ and $v_4$ are self orthogonal vectors. If we let $A$ denote the matrix representation of $\phi$ with respect to this orthogonal basis, then we see that $A$ is a diagonal $3 \times 3 $ matrix
		with diagonal entries $\phi(v_3, v_3)=0, \phi(v_4, v_4)=0$ and $ \phi(v,v).$  Hence $A$ is singular, contradicting the fact that $\phi$ is non-degenerate. Hence  $ \langle  v_3 \rangle  = \langle  v_4 \rangle $ and we are done in this case too. 
		
	\end{proof}
	
	\begin{question}\label{question}
		Let $V$ be a vector space over field $k$ with $\dim V \geq  4$ and let $\phi$ be a non-degenerate bilinear form on $V.$  Let $ S= \{v_1, v_2, \ldots, v_m \}$ be an orthogonal set of vectors in $V$ with respect to $\phi$ such that $ m > \dim V.$\\
		(1)	Then is it true that $ v_i = \alpha v_j$ for some $ i \neq j?$ \\
		(2) More generally, is it true that there exists a subset $T \subset S$ such that $|T|=n$ and for any $ v_i \in S \setminus T $ there is a vector $v_j \in T$ such that $ v_i = \alpha v_j$  for some $\alpha \in k?$
	\end{question}
	
	\section{Main Results}
	
	By a ring $R,$ we mean a commutative ring with unity. If in addition, $R$ is local ring with unique maximal ideal $\m$ and the residue field $k=R/\m,$ then we denote $R$ by $(R, \m, k).$ We first recall some basic definitions and preliminary facts used  throughout the paper. 
	\begin{definition} Let $(R, \m, k)$ be a Noetherian local ring. The ideal $(0:_R \m)$ is called the \textit{socle} of $R.$ This is denoted by $\soc R.$ Note that $\soc R$ is a finite dimensional vector space over the residue field $k =R/\m.$  
		
	\end{definition}

	\begin{definition} Artinian local ring $(R, \m, k)$
		is called an \textit{Artinian Gorenstein} local ring, if $\dim_{k} ( 0 :_R
		\m)=1,$ i.e., $\soc R$ is $1$-dimensional vector space over field $k.$
	\end{definition}

	\begin{definition} \textit{ Hilbert function} of a Noetherian local ring $(R, \m, k)$ is the numerical function $H_R: \mathbb N_{ \geq 0}  \longrightarrow \mathbb N_{ \geq 0}$ given by, for $ i \geq 0,$ $$  H_R(i) = \dim_k (\m^i/\m^{i+1})$$  The dimension $H_R(1)$ of the $k$-vector space $\m/\m^2$ is called the \textit{embedding dimension} of $R,$ denoted by $ \embdim R.$\
	\end{definition}
	
	\begin{remark}\label{length} Let $(R, \m, k)$ be  an Artinian local ring. \\
		(i) Let $n$ be the smallest positive integer with $\m^n \neq 0.$ Then $H_R(i)=0$ for $i \geq n+1.$ \\
		(ii)  Since $\ell(\m^i/\m^{i+1})= \dim_k (\m^i/\m^{i+1}),$  we notice that $\ell(R) = \displaystyle \sum_{i=0}^{n}H_R(i).$
		
	\end{remark}

	\begin{observation}\label{obs}
		Let $R$ be a ring with $\ell(R) =5.$ In view of the Remark \ref{length}(ii), it is easy to see that we have only the following possible Hilbert functions:
		
		\begin{enumerate}
			\item [\rm (i)] $ H_R(i)= 1$ for $ 0 \leq i \leq 4;$ 	
			\item [\rm (ii)]  $ H_R(0)= 1, ~ H_R(1)=3, ~ H_R(2)=1;$
			\item [\rm (iii)] $ H_R(0)= 1,~  H_R(1)=2, ~ H_R(2)=2;$
			\item [\rm (iv)] $ H_R(0)= 1, ~ H_R(1)=2, ~ H_R(2)=1, ~ H_R(3)=1.$
		\end{enumerate}
		We also see that if $(R, \m, k)$ has a Hilbert function of the case (i), $\m$ is a principal ideal. So $R$ is a principal ideal ring (PIR) in this case.
	\end{observation}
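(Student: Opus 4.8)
As $H_R$ is defined only for a local ring $(R,\m,k)$ — to which the problem has just been reduced — I work throughout in the local case. The plan is to derive the list from Remark~\ref{length}(ii) together with two applications of Nakayama's lemma, organised as a short case analysis on the embedding dimension $e := \embdim R = H_R(1)$.

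First I would fix the normalisations. Since $k=R/\m$ we have $H_R(0)=\dim_k(R/\m)=1$. Next, the index set $\{i : H_R(i)\neq 0\}$ is an initial segment $\{0,1,\ldots,n\}$ of $\mathbb N_{\geq 0}$: if $H_R(i)=0$ for some $i\geq 1$, then $\m^i=\m^{i+1}=\m\cdot\m^i$, so Nakayama applied to the finitely generated module $\m^i$ gives $\m^i=0$, hence $H_R(j)=0$ for all $j\geq i$. Since $\ell(R)=5>1$, the ring $R$ is not a field, so $\m\neq 0$ and $n\geq 1$; and Remark~\ref{length}(ii) becomes $\sum_{i=1}^{n}H_R(i)=5-H_R(0)=4$, with every summand at least $1$, so that $1\leq e$ and $n\leq 4$.

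Then I would run through $e\in\{1,2,3,4\}$ and read off the admissible solutions of $\sum_{i=2}^{n}H_R(i)=4-e$ with positive summands. If $e=1$, then $\m/\m^2$ is one-dimensional, so $\m=(x)$ is principal by Nakayama; hence $\m^i=(x^i)$, each $\m^i/\m^{i+1}$ is cyclic, and $H_R(i)\leq 1$ for all $i$, which together with $\sum_{i=1}^{n}H_R(i)=4$ forces $n=4$ and $H_R(1)=\cdots=H_R(4)=1$, that is, case (i); moreover every nonzero ideal $I$ equals $\m^j$ for the largest index $j$ with $I\subseteq\m^j$ (choose $a\in I\setminus\m^{j+1}$ and write $a=x^j u$ with $u$ a unit, so that $x^j\in I$), hence every ideal is principal and $R$ is a PIR. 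If $e=2$, then $\sum_{i\geq 2}H_R(i)=2$, so either $H_R(2)=2$ with $n=2$, giving case (iii), or $H_R(2)=H_R(3)=1$ with $n=3$, giving case (iv). If $e=3$, then $\sum_{i\geq 2}H_R(i)=1$, forcing $H_R(2)=1$ and $n=2$, which is case (ii). Finally, if $e=4$, then $\m^2=0$ and $H_R=(1,4)$; this degenerate case is immediate for our purpose, since then $\ann(a)=\m$ for every nonzero $a\in\m$, so that $\Gamma_E(R)$ is a single vertex and $\omega(\Gamma_E(R))=1$, and it is tacitly set aside, leaving exactly the cases (i)--(iv).

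I do not expect a genuine obstacle here: the statement is elementary once the bookkeeping is arranged. The only steps that require a little more than arithmetic are the two uses of Nakayama — that $H_R$ cannot vanish at some index and then become positive again, and that $e=1$ caps every $H_R(i)$ at $1$, which is what singles out case (i) in that branch. After that, the enumeration is just a matter of listing the partitions of $4$ subject to the constraints $H_R(1)=e$ and ``the support is an initial segment''.
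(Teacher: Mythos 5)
Your proof is correct and, being more careful than the paper (which offers no argument beyond citing Remark~\ref{length}(ii)), it actually exposes a defect in the statement itself: the list (i)--(iv) is incomplete. Your case $e=4$, i.e.\ $H_R(0)=1$, $H_R(1)=4$, $\m^2=0$, is a genuinely realizable Hilbert function of a length-five Artinian local ring (take $R=k[x_1,\dots,x_4]/(x_1,\dots,x_4)^2$), and it appears nowhere in (i)--(iv); note that the paper's analogous length-four enumeration does include the corresponding case $H_R(0)=1$, $H_R(1)=3$ with $\m^2=0$. Since Theorem~\ref{main} quotes Observation~\ref{obs} verbatim, the omission propagates there, though the conclusion survives for exactly the reason you give: when $\m^2=0$ every nonzero $a\in\m$ has $\ann(a)=\m$, so $\Gamma_E(R)$ is a single vertex and $\omega(\Gamma_E(R))=1=5-\dim_k\soc R$. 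Your reading that this case is ``tacitly set aside'' is charitable; strictly the Observation should carry a fifth case. The rest of your bookkeeping --- $H_R(0)=1$, the support of $H_R$ being an initial segment by Nakayama, $\embdim R=1$ forcing $H_R(i)\le 1$ for all $i$ and yielding the PIR conclusion, and the elimination of the compositions $(1,3)$, $(1,2,1)$, $(1,1,2)$ of $4$ --- is sound and is exactly what the paper leaves implicit.
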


	\noindent  We first prove a key result in our analysis of zero-divisors of length five rings and will be used frequently throughout the paper.
	
	\begin{proposition}\label{bilinear}
		Let $(R, \m, k)$ be an Artinian local ring with $\m^2=(l) \neq 0 $ and $\m^3=0.$ Then
		\begin{enumerate}
			\item [\rm (i)] there exists a symmetric bilinear form \[\phi : \m/\m^2 \times \m/\m^2 \rightarrow k \] such that  $\phi (\bar a, \bar b)=0 $ if and only if $ab=0$ in $R.$	
			\item [\rm (ii)] $ a\in \soc R$ if and only if $\bar a \in (\m/\m^2)^{\perp}$ and $\dim_k \soc R = \dim_k (\m/\m^2)^{\perp}+1.$
			\item [\rm (iii)] $R$ is Gorenstein if and only if $\phi$ is non-degenerate.
		\end{enumerate}

	\end{proposition}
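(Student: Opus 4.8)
The plan is to construct $\phi$ by hand from the multiplication map of $R$ and then read off (ii) and (iii) as essentially bookkeeping. The one preliminary I would establish first is that $\ann_R(l) = \m$: since $l \in \m^2$ we have $\m l \subseteq \m^3 = 0$, so $\m \subseteq \ann_R(l)$, and $l \neq 0$ forces $\ann_R(l) \neq R$, hence equality since $\m$ is maximal.

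For (i), given $a, b \in \m$ the product $ab$ lies in $\m^2 = (l)$, so $ab = \lambda l$ for some $\lambda \in R$ which is unique modulo $\ann_R(l) = \m$; I define $\phi(\bar a, \bar b)$ to be the class of $\lambda$ in $k = R/\m$. Bilinearity in each slot follows from bilinearity of the multiplication $\m \times \m \to \m^2$ together with $r(\lambda l) = (r\lambda)l$, and the form factors through $\m/\m^2$ in both variables because $\m^2 \cdot \m = \m^3 = 0$ kills any change of representative; symmetry is commutativity of $R$. The defining equivalence is then transparent: $\phi(\bar a, \bar b) = 0$ says $\lambda \in \m$, i.e. $ab = \lambda l \in \m l = 0$, and conversely $ab = 0$ forces $\lambda \in \ann_R(l) = \m$.

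For (ii), I would first note the chain $\m^2 \subseteq \soc R \subseteq \m$ --- the lower inclusion because $\m^3 = 0$, the upper because a unit cannot annihilate $\m \neq 0$ --- and that $\dim_k \m^2 = 1$ since $\m^2 = (l) \cong R/\ann_R(l) = k$. For $a \in \m$, part (i) gives $a \in \soc R \iff ab = 0$ for all $b \in \m \iff \phi(\bar a, \bar b) = 0$ for all $\bar b \iff \bar a \in (\m/\m^2)^\perp$. Hence the reduction map $\soc R \to \m/\m^2$ has image exactly $(\m/\m^2)^\perp$ and kernel $\soc R \cap \m^2 = \m^2$, giving a short exact sequence of $k$-vector spaces $0 \to \m^2 \to \soc R \to (\m/\m^2)^\perp \to 0$ and therefore $\dim_k \soc R = 1 + \dim_k (\m/\m^2)^\perp$. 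Then (iii) is immediate: $R$ is Gorenstein $\iff \dim_k \soc R = 1 \iff \dim_k (\m/\m^2)^\perp = 0 \iff \phi$ is non-degenerate.

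I do not expect a serious obstacle here; the only delicate point is the well-definedness in (i) --- checking that the ``quotient'' $\lambda$ is canonical modulo $\m$ and that the resulting form genuinely descends to $\m/\m^2$ in both slots --- and this is exactly where the hypotheses $l \neq 0$ (so that $\ann_R(l) = \m$) and $\m^3 = 0$ are used.
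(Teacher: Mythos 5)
Your proposal is correct and follows essentially the same route as the paper: define $\phi(\bar a,\bar b)$ as the class of the ``quotient'' $\lambda$ with $ab=\lambda l$, verify well-definedness using $\m l=0$, and deduce (ii) from the short exact sequence $0\to\m^2\to\soc R\to(\m/\m^2)^\perp\to 0$, with (iii) as an immediate consequence. Your preliminary observation $\ann_R(l)=\m$ is a slightly cleaner packaging of the paper's argument that $l\in\soc R$ and $\m^2$ is a one-dimensional $k$-vector space, but the substance is identical.
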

	\begin{proof}
		First note that, since $ \m^2 =(l)$ and $\m^3=0$ we get, $ l \in (0: \m) =\soc R.$\\	
		(i) Now define the map $\phi$ as follows. For $\bar a$ and $\bar b$ in $\m/\m^2,$  $ab \in \m^2=(l),$ hence $ab = ul$ for some $ u \in R.$ Define $\phi(\bar a, \bar b) = \bar u \in k=R/\m.$ We now check that $\phi$ is a well defined map. It is enough to check the well definedness of $\phi$ in any one of the two variables. To do  this, we let $\bar a = \bar a_1$ in $\m/\m^2$ and suppose $ab=ul$ and $a_1b = vl$ for some $u, v \in R.$ So $ a-a_1 \in \m^2= (l),$ giving us  $ a-a_1 = wl$ for some $w \in R.$ Multiplying both sides by $b,$ we get $ab-a_1b = wlb.$ Since $ l \in (0:\m), $ this gives $lb=0$ and hence $ab-a_1b=0,$ i.e., $ ab= a_1b$ in $R.$ This gives us  $ul=vl $ in $\m^2.$ Now, as $\m^2$ is a one dimensional $k$-vector space, we have the equality $ \bar u \bar l = \bar v \bar l$ in $\m^2$ treated as a $k$-vector space. Thus $\bar u = \bar v$ in $R/\m =k.$ Hence $\phi ( \bar a , \bar b) = \phi (\bar a_1, \bar b).$  While bilinearity of $\phi$ is clear, symmetry follows from the  commutativity of $R.$ Thus $\phi$ is a symmetric bilinear form. Clearly, if $ab=0$ then $\phi(\bar a, \bar b) =0.$ Conversely suppose that $\phi(\bar a, \bar b) = 0.$ This means that $ab = ul$ with $ \bar u = 0$ in $R/\m=k,$ i.e., $ u \in \m.$ If $u=0$ then $ab=0.$ If $ u \neq 0$ in $\m$ then $ul=0$ as $l \in (0: \m), $ and hence $ab=0$ again. Thus $ab=0$ is equivalent to $\phi(\bar a, \bar b) = 0,$ i.e., to $\bar a$ being orthogonal to $\bar b$ with respect to symmetric bilinear form $\phi.$\\
		\noindent (ii) Let $a \in \soc R,$ i.e., $ab=0$ for all $ b \in \m.$ By (i), this is equivalent to  $\phi(\bar a, \bar b) = 0$ for all $\bar b \in \m/\m^2,$ i.e., $ \bar a \in (\m/\m^2)^{\perp}. $ Thus $a \in \soc R$ if and only if $\bar a \in (\m/\m^2)^{\perp}.$ We  define a map \[f : \soc R \longrightarrow (\m/\m^2)^{\perp}\] given by $f(a) = \bar a \in \m/\m^2.$  It is easy to see that $f$ is a $k$-linear map. As noted above if $a  \in (\m/\m^2)^{\perp}$ then $ a \in \soc R.$ This proves that $f$ is surjective.  Since $\m^2=(l),$ we find that $\ker f = \langle l \rangle ,$ a one dimensional subspace. Thus we have a short exact sequence of $k$-vector spaces,
		\[ 0 \longrightarrow \langle l \rangle  \longrightarrow \soc R \longrightarrow (\m/\m^2)^{\perp} \longrightarrow 0 .\]  This proves  that $\soc R \cong (\m/\m^2)^{\perp}  \oplus \langle l \rangle  $ and that $\dim_k \soc R = \dim_k (\m/\m^2)^{\perp}+1.$\vspace{0.1 cm} \newline	
		\noindent (iii) Recall that  $R$ is Gorenstein if $\dim_k \soc R =1$ and  in our case this is equivalent to  $\soc R =(l) = \m^2. $ We also note that bilinear form $\phi$ being  non-degenerate is equivalent to $(\m/\m^2)^{\perp} =0.$ The result is now immediate from (ii). 
	\end{proof}
	\begin{lemma}\label{socle}
		Let $(R, \m , k)$ be an Artinian local ring. 
		\begin{enumerate}
			\item [\rm (i)] If  $\ell(R) =5 $ and $H_R(1) =3,$  then $ 1 \leq  \dim_k \soc R \leq 3.$ 
			\item[\rm (ii)] Suppose  $\m^2 \neq 0$ and $\m^3=0.$ For $a , b \in \m,$ if $ \ann(a) \neq \ann(b), $ then $\langle \bar a \rangle \neq  \langle \bar b \rangle $ as one dimensional subspaces in $\m/\m^2.$ 
			
		\end{enumerate}
		
	\end{lemma}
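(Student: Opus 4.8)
The plan is to dispatch the two parts separately; both arguments are short, and the only part needing genuine care is (ii).

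For (i), the first step is to note that the hypotheses $\ell(R)=5$ and $H_R(1)=3$ force the Hilbert function entirely: by Observation \ref{obs} the only possibility with $H_R(1)=3$ is case (ii), so $H_R=(1,3,1)$, and in particular $\m^3=0$ while $\m^2$ is one-dimensional, hence principal, say $\m^2=(l)$ with $l\neq 0$. Thus Proposition \ref{bilinear} applies, and Proposition \ref{bilinear}(ii) gives $\dim_k\soc R=\dim_k(\m/\m^2)^{\perp}+1$. Since $\dim_k(\m/\m^2)^{\perp}\geq 0$ we immediately get $\dim_k\soc R\geq 1$. For the upper bound, observe that $(\m/\m^2)^{\perp}$ is a \emph{proper} subspace of the $3$-dimensional space $\m/\m^2$: indeed $\m^2\neq 0$ yields $a,b\in\m$ with $ab\neq 0$, so $\phi(\bar a,\bar b)\neq 0$ and $\bar a\notin(\m/\m^2)^{\perp}$. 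Hence $\dim_k(\m/\m^2)^{\perp}\leq 2$ and $\dim_k\soc R\leq 3$. (Alternatively, without invoking Proposition \ref{bilinear}: $\soc R\subseteq\m$ and $\ell_R(\m)=\ell(R)-1=4$, so if $\dim_k\soc R=4$ then $\soc R=\m$, whence $\m^2=\m\cdot\m\subseteq\m\cdot\soc R=0$, a contradiction.)

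For (ii), I would prove the contrapositive: assuming $\langle\bar a\rangle=\langle\bar b\rangle$ in $\m/\m^2$, show $\ann(a)=\ann(b)$; here $a,b$ are taken nonzero, as is implicit since these are vertices of $\Gamma(R)$. If $\bar a=0$ then $\langle\bar b\rangle=\langle\bar a\rangle=0$, so $\bar b=0$ as well, and then $a,b\in\m^2$; since $\m^3=0$ we have $\m^2\subseteq(0:_R\m)=\soc R$, so $a,b$ are nonzero socle elements and $\ann(a)=\m=\ann(b)$. Otherwise $\bar a\neq 0$, so $\langle\bar b\rangle$ is one-dimensional, $\bar b\neq 0$, and $\bar b=\lambda\bar a$ for some nonzero $\lambda\in k$; lifting $\lambda$ to an element $u\in R$, which is a unit because $R$ is local and $u\notin\m$, we may write $b=ua+m$ with $m\in\m^2$. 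Now take $r\in\ann(a)$: since $ra=0$ and $a\neq 0$, $r$ is not a unit, so $r\in\m$, and therefore $rm\in\m\cdot\m^2=\m^3=0$; hence $rb=u(ra)+rm=0$, i.e. $r\in\ann(b)$. The reverse inclusion follows symmetrically from $a=u^{-1}b-u^{-1}m$. Thus $\ann(a)=\ann(b)$, completing the contrapositive.

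The only subtle point is in (ii): one must use $\m^3=0$ precisely to annihilate the correction term $rm$, which in turn requires knowing $r\in\m$, i.e. that the annihilator of a nonzero element of a local ring lies in the maximal ideal; and one must handle (or rule out, via the nonzeroness convention for vertices) the degenerate case $\bar a=\bar b=0$ separately, since for an element of $\m^2$ versus $0$ the annihilators differ while the spanned subspaces coincide. Part (i) and the rest are routine once the Hilbert function has been identified.
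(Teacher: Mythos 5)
Your proposal is correct. Part (ii) is essentially the paper's own argument: the paper also proves the contrapositive by writing $a-ub=c\in\m^2$ for a unit $u$, multiplying by $x\in\ann(a)$, and killing the correction term via $\m^2\subseteq(0:\m)$; you are somewhat more careful than the paper in two places, namely in justifying that $r\in\ann(a)$ lies in $\m$ (the paper silently assumes $x\in\m$, which requires $a\neq 0$) and in treating the degenerate case $\bar a=\bar b=0$ (which the paper excludes implicitly by speaking of one-dimensional subspaces). Part (i) is where you genuinely diverge: the paper bounds $\dim_k\soc R$ by a direct length count from the exact sequence $0\to\m/\soc R\to R/\soc R\to R/\m\to 0$ together with the strict inclusion $\soc R\subsetneqq\m$, whereas your primary route pulls the identity $\dim_k\soc R=\dim_k(\m/\m^2)^{\perp}+1$ from Proposition \ref{bilinear}(ii) and then shows the radical is a proper subspace of the three-dimensional $\m/\m^2$. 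This is legitimate and non-circular, since Proposition \ref{bilinear} precedes the lemma and does not depend on it, and it has the merit of tying the socle bound directly to the bilinear-form machinery used later; on the other hand the paper's length argument is more elementary and independent of the hypothesis $\m^2=(l)$. Your parenthetical alternative for (i) is essentially the paper's argument in compressed form.
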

	\begin{proof} (i) Since $\ell(R)=5$ and  $H_R(1)=3,$ the Hilbert function of $R$ is $H_R(0)=1, H_R(1) =3, H_R(2) =1$ and $H_R(i )=0 $ for $ i \geq 3.$ Hence we have $\m^2= (l) \neq 0$ and $\m^3=0.$ As $\m^2 = (l) \subseteq \soc R,$ we get $ \dim \soc R \geq 1.$ Consider the following short exact sequence of $R$-modules,
		\[ 0 \rightarrow \m/\soc R \rightarrow R/ \soc R \rightarrow R/\m \rightarrow 0\]
		Hence from the additivity of length, we obtain, $ \ell(R/\soc R) = \ell(R/\m) + \ell(\m/ \soc R).$ Since $\ell(R) < \infty,$ we get $\ell(R/ \soc R) = \ell(R) - \ell(\soc R).$ Hence, \[ \ell(R) = \ell(R/\m) + \ell(\m/ \soc R) + \ell(\soc R)   \] We now observe that the inclusion $\soc R \subsetneqq \m$ is  strict.   This is because, if $\soc R = \m,$  then $\m^2 =0,$ a contradiction. So $\ell(\m/\soc R) \geq 1.$ Thus length of each term in the above equation  is at least one.  Note also  $\ell(\soc R) = \dim_k \soc R.$ Hence if $\dim_k \soc R \geq 4, $ then $\ell(R) \geq 6,$ a contradiction. Therefore $\dim_k \soc R \leq 3.$ \vspace{0.1 cm} \newline
		(ii) Note first that $\m^2 \subseteq 0:\m.$ Suppose on the contrary that $\langle \bar a \rangle  = \langle  \bar b\rangle .$ Then $a-ub = c \in \m^2$ for some unit $u \in R.$ We now show that $\ann(a) = \ann(b).$ To do this, let $ x \in \ann(a). $ Multiplying the equation $a-ub = c$ by $x$ on both sides, we get $ xa - xub = xc,$ i.e., $-xub=xc.$ Since $ x\in \m$ and $ c \in \m^2 \subseteq 0: \m$ we obtain, $xc=0.$ Thus $ xub =0,$ and as  $u$ is unit in $R,$ we see that  $x \in \ann(b).$ This shows $\ann(a) \subseteq \ann(b).$ Other inclusion is similar. Thus $\ann(a) = \ann(b).$
	\end{proof}
	
	As the first application of Lemma \ref{socle}(ii), we give an easy proof of the fact that $ \omega(\Gamma_E(R)) < \infty $ for rings of length $4.$ The original proof by J. Coykendall et al. \cite[Proposition 5.8(v)]{Coy} is a case by case analysis and proceeds by obtaining presentations of rings using  Cohen structure theorem in some specific cases. The most complicated case is when $R$ is a local ring of length $4.$ We offer an easy proof of their result, avoiding, for instance, Cohen's structure theory. Other cases where $\ell(R) \leq 3$ can easily be dealt with by first proving the result in the local case and then in the general case writing $R$ as a finite product of Artinian local rings. 
	
	\begin{proposition}
		Let $(R, \m, k)$ be a local ring of length $4.$ Then $\omega(\Gamma_E(R)) \leq 3.$ 	
	\end{proposition}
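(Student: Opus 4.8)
\emph{Plan.} The statement is already for a local ring, so no reduction is needed. The plan is to run through the possible Hilbert functions of a length-four local ring. By Remark \ref{length}(ii), and since $H_R(1)=1$ forces $\m$, hence $\m^2$, to be principal (ruling out $(1,1,2)$), the only possibilities are $H_R=(1,1,1,1)$, $H_R=(1,2,1)$, and $H_R=(1,3)$. Throughout I will use that in an Artinian local ring the set of nonzero zero-divisors is exactly $\m\setminus\{0\}$, so every vertex of $\Gamma_E(R)$ has a representative in $\m$, and that every nonzero element of $\soc R$ has annihilator $\m$, so all such elements lie in a single equivalence class.

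Two of the three cases are immediate. If $H_R=(1,3)$ then $\m^2=0$, so $ab=0$ for all $a,b\in\m$; hence $\ann(a)=\m$ for every nonzero $a\in\m$, $\Gamma_E(R)$ has a single vertex, and $\omega(\Gamma_E(R))=1$. If $H_R=(1,1,1,1)$ then $\m=(x)$ is principal, the ideals of $R$ form the chain $R\supset(x)\supset(x^2)\supset(x^3)\supset 0$, and every nonzero element of $\m$ has the form $ux^i$ with $u$ a unit and $1\le i\le 3$; since $\ann(ux^i)=\ann(x^i)=(x^{4-i})$, there are exactly three equivalence classes of zero-divisors (represented by $x,x^2,x^3$), so $\Gamma_E(R)$ has only three vertices and $\omega(\Gamma_E(R))\le 3$.

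The substantive case is $H_R=(1,2,1)$, where $\m^2=(l)\ne 0$, $\m^3=0$ and $\dim_k\m/\m^2=2$; here I invoke Proposition \ref{bilinear} and Lemma \ref{socle}(ii). Let $\{[a_1],\dots,[a_n]\}$ be a clique in $\Gamma_E(R)$ with $a_i\in\m$, so $a_ia_j=0$ and $\ann(a_i)\ne\ann(a_j)$ for $i\ne j$. Since all nonzero socle elements form one class, at most one $a_i$ — say $a_1$, if present — lies in $\soc R$; for $i\ge 2$ we have $a_i\notin\soc R\supseteq\m^2$, so $\bar a_i\ne 0$ in $\m/\m^2$. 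By Lemma \ref{socle}(ii) the lines $\langle\bar a_i\rangle$, $i\ge 2$, are pairwise distinct, and by Proposition \ref{bilinear}(i) the vectors $\bar a_2,\dots,\bar a_n$ are pairwise orthogonal with respect to $\phi$. I then observe that a $2$-dimensional bilinear space whose form is not identically zero cannot contain three pairwise non-proportional nonzero pairwise-orthogonal vectors: if $v_1,v_2$ span the space and $v_3=\alpha v_1+\beta v_2$ is a third such vector, then non-proportionality forces $\alpha,\beta\ne 0$, and orthogonality of $v_3$ against $v_1$ and against $v_2$ forces $\phi(v_1,v_1)=\phi(v_2,v_2)=\phi(v_1,v_2)=0$, i.e.\ $\phi\equiv 0$. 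But $\m^2\ne 0$ in this case, so $\phi\not\equiv 0$ by Proposition \ref{bilinear}(i); hence $n-1\le 2$, i.e.\ $n\le 3$.

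The only real obstacle is this last case; the rest is bookkeeping about Hilbert functions and annihilators. Within the last case the two delicate points are: first, a vertex of $\Gamma_E(R)$ may be represented by a socle element, which has zero image in $\m/\m^2$, so it must be set aside before Lemma \ref{socle}(ii) can be applied; and second, the degenerate outcome $\phi\equiv 0$ of the $2$-dimensional orthogonality computation is precisely what the hypothesis $\m^2\ne 0$ excludes. Putting the three cases together gives $\omega(\Gamma_E(R))\le 3$.
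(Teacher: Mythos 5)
Your proof is correct and follows essentially the same route as the paper: the same enumeration of Hilbert functions $(1,1,1,1)$, $(1,3)$, $(1,2,1)$, with the first two cases handled identically and the third resting on Lemma \ref{socle}(ii) applied to the non-socle vertices of a putative large clique. The only cosmetic difference is in the final step of the $(1,2,1)$ case: the paper uses Nakayama to get $\m=(a_1,a_2)$ and derives $a_3\in\soc R$, whereas you reach the equivalent contradiction $\phi\equiv 0$ (i.e.\ $\m^2=0$) via the bilinear form of Proposition \ref{bilinear}; both are valid.
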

	\begin{proof}
		
		Note that we have three possibilities for the Hilbert function of local ring $R$ of length $4,$ namely:
		\begin{enumerate}
			\item [\rm (i)] $ H_R(i)= 1$ for $ 0 \leq i \leq 3$ 	 and  $H_R(i)=0$ for  $i \geq 4;$
			\item [\rm (ii)]  $H_R(0)= 1, ~ H_R(1)=3, $ and $ H_R(i)=0$ for all $ i \geq 2;$
			\item [\rm (iii)] $ H_R(0)= 1,~  H_R(1)=2, ~ H_R(2)=1$ and $H_R(i)=0$ for all $ i \geq 3.$
		\end{enumerate}
		(i) In the case (i), $R$ is a principal ideal ring because $\m$ is a principal ideal. Hence, in this case, $\Gamma_E(R)$ is a finite graph consisting of exactly three vertices corresponding to generators of powers of  ideal $\m, \m^2$ and $ \m^3.$ Hence  $\omega(\Gamma_E(R)) \leq 3.$ \vspace{0.1 cm} \newline
		(ii) In this case, we have $\m^2=0$ and so $\ann(x) =\m$ for all $x \in \m.$ Hence $\Gamma_E(R)$ consists of exactly one vertex and the conclusion is obvious. \vspace{0.1 cm} \newline
		(iii) In this case, $\dim_k \m/\m^2=2,  \m^2 = (l)$ and $\m^3=0.$ Suppose on the contrary that $\omega(\Gamma_E(R)) \geq 4.$ Observe that a clique of size $4$ in $\Gamma_E(R)$ must contain subclique of size $3,$ say, $\{[a_1], [a_2], [a_3]\}$ such that $[a_i] \neq [l]$ for any $i.$ By lemma \ref{socle}(ii), $\langle \bar a_i \rangle  \neq \langle \bar a_j \rangle $ for $ i \neq j.$ Since $\dim_k \m/\m^2 =2,$  $\m/\m^2 = \langle  \bar a_1, \bar a_2 \rangle $ and hence by Nakayama lemma $ \m=(a_1, a_2).$ Since $ a_3a_1 = a_3a_2 =0,$ we get $ a_3 \m =0,$ i.e., $a_3 \in \soc R.$ This gives $[a_3]=[l]$  contradicting $[a_i] \neq [l] $ for any $i.$ Thus $\omega(\Gamma_E(R)) \leq 3$ in this case too.

	\end{proof}
	
	

	\begin{proposition}\label{bound}
		Let $(R, \m , k)$ be an Artinian local ring with $\ell(R) =5 $ and $H_R(1)=3.$ Then $ \omega(\Gamma_E(R)) < \infty.$ In fact, $\omega (\Gamma_E(R))  \leq  5- \dim_k \soc R.$
	\end{proposition}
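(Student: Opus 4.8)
The plan is to turn a clique in $\Gamma_E(R)$ into an orthogonal, pairwise non-proportional set of nonzero vectors in the bilinear space $(\m/\m^2,\phi)$ of Proposition~\ref{bilinear}, and then to bound its size by orthogonal geometry after reducing to the non-degenerate situation via the radical. First I would pin down the Hilbert function: by Observation~\ref{obs} (using Remark~\ref{length}) the hypothesis $H_R(1)=3$ forces $H_R=(1,3,1)$, hence $\m^2=(l)\neq 0$, $\m^3=0$ and $\dim_k\m/\m^2=3$, so Proposition~\ref{bilinear} applies. Set $V=\m/\m^2$ and $d=\dim_k\soc R$; Lemma~\ref{socle}(i) gives $1\le d\le 3$, and Proposition~\ref{bilinear}(ii) identifies the radical $V^\perp$ with the image of $\soc R$, so $\dim_k V^\perp=d-1$.

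Next I would do the vertex bookkeeping. Since $R$ is local Artinian with $\m\neq 0$, the nonzero zero-divisors of $R$ are exactly the elements of $\m\setminus\{0\}$, and every nonzero element of $\soc R$ has annihilator $\m$, so all of them represent one single vertex $[l]$ of $\Gamma_E(R)$. Given a clique $\{[a_1],\dots,[a_m]\}$, deleting the vertex $[l]$ if it occurs leaves $t\ge m-1$ vertices $[a_1],\dots,[a_t]$ with representatives $a_i\in\m\setminus\soc R$. Then each $\bar a_i$ is nonzero (otherwise $a_i\in\m^2\subseteq\soc R$, since $\m^3=0$) and lies off $V^\perp$ by Proposition~\ref{bilinear}(ii); the relations $a_ia_j=0$ give $\phi(\bar a_i,\bar a_j)=0$ for $i\neq j$; and $\ann(a_i)\neq\ann(a_j)$ gives $\langle\bar a_i\rangle\neq\langle\bar a_j\rangle$ by Lemma~\ref{socle}(ii). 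So $\{\bar a_1,\dots,\bar a_t\}$ is an orthogonal set of nonzero, pairwise non-proportional vectors of $V$ lying off the radical.

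Then I would pass to $\ov V:=V/V^\perp$, which carries an induced \emph{non-degenerate} symmetric form (Theorem~\ref{symbi}(iii)) and has $\dim_k\ov V=4-d\in\{1,2,3\}$. The images $\tilde a_i$ are nonzero and pairwise orthogonal, and they stay pairwise non-proportional: if $\tilde a_i=\alpha\tilde a_j$ then $\alpha\neq 0$ and $a_i-\alpha a_j$ lies in the preimage of $V^\perp$, which is $\soc R$; hence $x(a_i-\alpha a_j)=0$ for all $x\in\m$, so $\ann(a_i)\cap\m=\ann(a_j)\cap\m$, and since both annihilators are contained in $\m$ this forces $\ann(a_i)=\ann(a_j)$, contradicting $[a_i]\neq[a_j]$. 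Finally I would bound $t$ by $\dim_k\ov V$. If $\dim\ov V=1$, two nonzero vectors in a line are proportional, so $t\le 1$. If $\dim\ov V=2$ and $t\ge 3$, then $\tilde a_1,\tilde a_2$ form a basis in which the Gram matrix of the form is $\mathrm{diag}\bigl(\phi(\tilde a_1,\tilde a_1),\phi(\tilde a_2,\tilde a_2)\bigr)$; non-degeneracy (Theorem~\ref{symbi}(i)) makes both diagonal entries nonzero, and writing $\tilde a_3=\beta_1\tilde a_1+\beta_2\tilde a_2$, orthogonality to $\tilde a_1$ and $\tilde a_2$ forces $\beta_1=\beta_2=0$, i.e.\ $\tilde a_3=0$, a contradiction; hence $t\le 2$. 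If $\dim\ov V=3$, which is exactly the Gorenstein case $d=1$ by Proposition~\ref{bilinear}(iii), Lemma~\ref{ortho} gives $t\le 3$. In all cases $t\le 4-d$, so $\omega(\Gamma_E(R))\le m\le t+1\le 5-d=5-\dim_k\soc R<\infty$.

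The substantive step is the bound $t\le 3$ in the Gorenstein case, which is precisely the point of Lemma~\ref{ortho}; the only other thing to watch is the passage to $V/V^\perp$, where proportionality in $V$ need not survive and must be re-established directly from the annihilator condition as above, while the cases $d=2,3$ are then elementary linear algebra.
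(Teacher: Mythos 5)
Your proof is correct, but it takes a genuinely more uniform route than the paper's. The paper splits into three separate cases according to $\dim_k \soc R \in \{1,2,3\}$ and argues each one by hand: the Gorenstein case via Lemma \ref{ortho} applied directly to $\m/\m^2$, and the degenerate cases ($\dim_k\soc R = 2, 3$) via rather intricate ad hoc computations --- in particular, for $\dim_k\soc R=2$ the paper analyzes whether $\dim_k\langle \bar x,\bar y,\bar z\rangle$ is $2$ or $3$, shows $\bar x,\bar y$ are isotropic, and proves $\langle\bar a\rangle\cap\langle\bar x,\bar y\rangle=0$ by an explicit annihilator computation before invoking the non-degeneracy of a complement of the radical. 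You instead pass immediately to the quotient $V/V^\perp$ with its induced non-degenerate form and prove one statement covering all cases: the clique minus the socle vertex yields a pairwise non-proportional orthogonal set of nonzero vectors there, whose size is at most $\dim_k V/V^\perp = 4-\dim_k\soc R$ (by Lemma \ref{ortho} in dimension $3$, and by elementary Gram-matrix arguments in dimensions $1$ and $2$). The one genuinely delicate point in your approach --- that non-proportionality could a priori be destroyed by the quotient --- you handle correctly, using that the preimage of $V^\perp$ in $\m$ is exactly $\soc R$ (Proposition \ref{bilinear}(ii)) and rerunning the annihilator argument with a unit lift of the scalar. What your approach buys is brevity, uniformity, and a transparent explanation of why the bound is exactly $5-\dim_k\soc R$; what the paper's buys is explicitness (e.g.\ it pins down $\omega(\Gamma_E(R))=2$ exactly when $\dim_k\soc R=3$). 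Both proofs rest on the same two pillars, Proposition \ref{bilinear} and Lemma \ref{ortho}.
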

	\begin{proof} 
		Since $\ell(R)=5$ and  $H_R(1)=3,$ we have $\m^2= (l) \neq 0$ and $\m^3=0.$ By  Proposition \ref{bilinear}, there exists a symmetric  bilinear form \[\phi: \m/\m^2 \times \m/\m^2 \rightarrow k \] such that for $a, b \in \m,$ we have  $\phi (\bar a, \bar b)=0 $ if and only if $ab=0$ in $R.$ By Lemma \ref{socle}(i), we have  $1 \leq \dim_k \soc R \leq 3.$ We now divide the proof in three cases according as $\dim \soc R$ is $1, 2$ or $3.$ \vspace{0.15 cm} \newline
		\noindent {\bf Case(i):} $\dim_k \soc R =1.$ In this case, $R$ is Gorenstein and by Proposition \ref{bilinear}(iii), the symmetric bilinear form $\phi$ is non-degenerate.  Since $ l \in \soc R,$ we obtain that vertex $[l]$ is adjacent to every other vertex in $\Gamma_E(R).$ Also since $\soc R = \m^2= (l) $ and $\m^3=0,$ hence for any $[x] \in V(\Gamma_E(R)) \setminus \{ ~ [l] ~ \},$ we have  $ x \in \m \setminus \m^2.$ \\ 
		\indent We show that  $ \omega (\Gamma_E(R)) \leq 4.$ Suppose on the contrary that $ \omega (\Gamma_E(R)) \geq 5.$  In any clique of size $5,$ discarding the vertex $[l]$ if present in the clique, we are left with a clique of size $4,$ say $\{ [a_1], [a_2], [a_3], [a_4]\}$ such that $ a_i \in \m \setminus \m^2.$ Since $a_i a_j =0$ for $ i \neq j,$ by Proposition \ref{bilinear}(i) we obtain $ \phi(\bar a_i, \bar a_j) =0$ for all $ i \neq j.$ Thus $ \{ \bar a_1, \bar a_2, \bar a_3, \bar a_4 \}$ is an orthogonal set in $\m /\m^2$ w.r.t. bilinear form $\phi.$ Since $\phi$ is a non-degenerate bilinear form on $\m/\m^2$  and $\dim \m/\m^2=3,$ by Lemma \ref{ortho}, we obtain, $ \langle \bar a_i \rangle  = \langle \bar a_j \rangle $ for some $ i \neq j.$ Hence by Lemma \ref{socle}(ii),  $ \ann(a_i) = \ann(a_j).$ This gives $[a_i] =[a_j]$ contradicting the distinctness of the vertices in the clique $\{ [a_1], [a_2], [a_3], [a_4]\}.$  Hence $ \omega(\Gamma_E(R)) \leq 4.$ \vspace{0.15 cm} \newline
		\noindent {\bf Case(ii):} $\dim_k \soc R=2.$ By Proposition \ref{bilinear}(ii), we have $\dim_k (\m/\m^2)^{\perp}=1.$ In this case $\phi$ is a degenerate bilinear form  with $\rank \phi =2.$ Since $\dim_k (\m/\m^2)^{\perp}=1, $  $(\m/\m^2)^{\perp} =  \langle  \bar a \rangle $ for some $ a \in \m \setminus \m^2.$ By Proposition \ref{bilinear}(ii), $ a \in \soc R$ and so $[a]=[l]$ in $\Gamma_E(R).$ In fact, for any $0 \neq b \in \soc R$  we see that $ [b]= [l]$ in $\Gamma_E(R).$ \vspace{0.1 cm} \newline  We now claim that $\omega(\Gamma_E(R)) \leq 3.$ Suppose on the contrary that $\omega(\Gamma_E(R)) \geq 4.$ Observe that a clique of size $4$ must have a subclique $\{[x], [y], [z] \}$ such that $ x, y, z \notin \soc R.$ Since $ \m^2 \subseteq \soc R,$  $ x, y, z \notin \m^2.$
		We make the following observations. \vspace{0.1 cm} \newline
		\noindent (i) $ x, y, z \notin \m^2 \implies  \bar x, \bar y, \bar z$ are non-zero vectors in $\m/\m^2.$\\
		(ii)  By Proposition \ref{bilinear}(ii), $  \{\bar x, \bar y, \bar z \}$ is an orthogonal set  in $\m/\m^2.$ \\
		(iii) $ x, y, z \notin \soc R \implies \bar x, \bar y, \bar z \notin (\m/\m^2)^{\perp}$ ( by Proposition \ref{bilinear}(ii)).\\
		(iv)  By Lemma \ref{socle}(ii), $ \langle  \bar x \rangle ,  \langle \bar y \rangle , \langle \bar z\rangle  $ are mutually distinct  subspaces of $\m/\m^2.$  \\
		(v) By (iv), it follows that   $\dim_k \langle  \bar x, \bar y, \bar z \rangle $ is either $2$ or $3.$ \vspace{0.1 cm} \newline
		\noindent Suppose first that $ \dim_k \langle  \bar x, \bar y, \bar z \rangle =3.$ Since $\dim_k \m/\m^2=3,$ we get $\m/\m^2 = \langle  \bar x, \bar y, \bar z \rangle.$ Hence by Nakayama lemma, $\m= (x, y, z).$ By observation (ii) above, $\{\bar x, \bar y, \bar z \}$ is an orthogonal basis of $\m/\m^2.$ Since $\phi$ is degenerate, hence the matrix of $\phi$ corresponding to orthogonal basis $\{\bar x, \bar y, \bar z \}$ is a diagonal singular matrix. Hence one of $\bar x, \bar y, \bar z,$ say $\bar x,$ has the property that $ \phi(\bar x, \bar x)=0.$ As $\phi(\bar x, \bar y) = 0=\phi(\bar x, \bar z),$  by Proposition \ref{bilinear}(i), $ xx=xy=xz=0.$ Now since $\m = (x, y, z),$ we have $x\m=0,$ i.e., $ x\in (0:\m).$ This is a  contradiction to our choice of $x \notin \soc R.$ Hence  $ \dim_k \langle  \bar x, \bar y, \bar z \rangle =3$ is not possible. \\
		\indent Now, we can assume that  $ \dim_k  \langle  \bar x, \bar y, \bar z \rangle =2.$ By the observation (iv) above, none of $\bar x, \bar y$ or $\bar z$ is a multiple of the other, so we see that any two of $\bar x, \bar y, \bar z$ forms a basis of $\langle  \bar x, \bar y, \bar z \rangle ,$ say $\{ \bar x, \bar y\}.$ Hence $\bar z = \bar u \bar x + \bar v \bar y$ for some $ v, v \in R$ such that $\bar u \neq 0, \bar v \neq 0$ in $k =R/\m.$ Multiplying both sides by $\bar x $  we find that  $\bar x^2 =0.$ Similarly $\bar y^2 =0.$ Thus $\phi(\bar x, \bar x) = \phi(\bar y, \bar y)=0.$\vspace{0.1 cm} \newline
		We now claim that,  \[ \m/\m^2 \cong \langle  \bar a \rangle  \oplus \langle  \bar x, \bar y \rangle. \]
		Since $\dim \m/\m^2=3,$ to prove our claim, it suffices to show that $\langle  \bar a \rangle  \cap \langle  \bar x, \bar y \rangle  = 0.$ Suppose $ \langle  \bar a \rangle  \cap \langle  \bar x, \bar y \rangle  \neq  0,$ i.e., $\bar a \in  \langle  \bar x, \bar y\rangle .$   Then $\bar a  = \bar \alpha \bar x + \bar \beta \bar y$ for some $ \alpha , \beta \in R.$ We first observe that both $\bar \alpha \neq 0 $ and $\bar \beta \neq 0$ in $k.$ This is because, if  $\bar \alpha =0,$ then $ \bar a = \bar \beta \bar y.$ This gives $ \langle \bar a \rangle  = \langle  \bar y \rangle ,$ and hence by Lemma \ref{socle}(ii), $ \ann(a) = \ann(y).$ As $a \in \soc R$ we have $\ann(a) =\m$ and  so $\ann(y)=\m.$ This gives $ y \in \soc R,$ a  contradiction to  $y \notin \soc R.$ Thus both   $\bar \alpha \neq 0, \bar \beta \neq 0$ in $k$ and therefore  $\alpha, \beta $ are units in $R.$ Now  $\bar a  = \bar \alpha \bar x + \bar \beta \bar y$ gives us  $ a - \alpha x - \beta y \in \m^2=(l). $ We write,  
		\[\hspace{2 cm} a - \alpha x - \beta y  = u l  \hspace{2 cm}  \quad . ~ . ~ . ~ . ~ . ~ . ~ (A) \] for some $ u \in R.$ We now show that $\ann(x) = \ann(y).$ To do this, let $ t \in \ann(x). $ Multiplying the equation (A) above by $t$ on both sides we get $ ta -t \alpha x - t\beta y  = tu l.$
		As $ a, l \in \soc R,$  $ta= tl=0.$  The above equation  becomes $t \beta y =0.$ Since $\beta$ is a unit, we obtain $ty=0$ giving us $t \in \ann(y).$ Hence $ \ann(x) \subseteq \ann(y).$ Similarly, we can show that $\ann(y) \subseteq \ann(x).$ Thus $\ann(x) = \ann(y),$ i.e., $[x]=[y]$ contradicting the distinctness of the vertices  $[x]$ and $[y]$ of $\Gamma_E(R).$ Hence  $ \langle  \bar a \rangle  \cap \langle  \bar x, \bar y \rangle  = 0.$ This proves our claim  $\m/\m^2 \cong \langle  \bar a \rangle  \oplus \langle  \bar x, \bar y \rangle  \cong (\m/\m^2)^{\perp} \oplus \langle  \bar x, \bar y \rangle .$ Thus we have, \[ \frac{\m/\m^2}{ (\m/\m^2)^{\perp} } \cong \langle  \bar x, \bar y \rangle \]
		It follows from Theorem \ref{symbi}(ii) that  $\langle  \bar x, \bar y \rangle $ is a non-degenerate subspace of $\m/\m^2$ corresponding to $\phi.$ Hence $\phi(\bar x, \bar x)\neq 0$ and $\phi(\bar y, \bar y)\neq 0.$ This contradicts $\phi(\bar x, \bar x) = \phi(\bar y, \bar y)=0.$ Hence we have $\omega(\Gamma_E(R)) \leq 3.$\vspace{0.15 cm} \newline
		\noindent {\bf Case(iii):} $\dim_k \soc R =3.$ We claim that $ \omega (\Gamma_E(R)) =2.$  We suppose on the contrary that $\omega(\Gamma_E(R)) \geq 3.$ As before, for every element $0 \neq t \in\soc R,$ we have $[t]=[l]$ in $\Gamma_E(R).$ So a clique of size $3$ must have a subclique of size $2,$ i.e., adjacent vertices $[x]$ and $[y]$ in $\Gamma_E(R)$ such that  $ x, y \notin \soc R.$ Since $[x]$ adjacent to $[y]$ in $\Gamma_E(R),$ we have $xy=0.$ Since $\dim_k \soc R=3,$ by Proposition \ref{bilinear}(ii),  $\dim (\m/\m^2)^{\perp} =2.$ Also as $ y \notin \soc R$  Proposition \ref{bilinear}(ii) gives us $\bar y \notin (\m/\m^2)^{\perp}.$ If we let $\{\bar a_1, \bar a_2\}$ be a basis for $ (\m/\m^2)^{\perp},$ then $\{\bar a_1, \bar a_2, \bar y \}$ is a basis for $\m/\m^2.$ By Nakayama lemma $ \m = (a_1, a_2, y).$
		Since $\bar a_1, \bar a_2 \in (\m/\m^2)^{\perp},$ hence by Proposition \ref{bilinear}(ii), we have, $ a_1, a_2 \in \soc R.$  Thus $ x a_1= xa_2= xy =0$ implying $ x\in 0:\m= \soc R.$ This is a contradiction to $x \notin \soc R.$ Hence  $\omega(\Gamma_E(R)) \leq 2.$ Further, note that $[l]$ is adjacent to every vertex in $\Gamma_E(R).$ Since there exists $ t \notin \soc R,$ we find that $[t] \neq [l],$ and that there is an edge joining $[t]$ and $[l].$  Hence, $ \omega (\Gamma_E(R)) =2$ in this case.
		
	\end{proof}

	\begin{corollary} Let $(R, \m, k) $ be an Artinian local ring with $\ell(R)=5$ and $H_R(1) =3.$ If $\cha k \neq 2,$  then $ \omega (\Gamma_E(R))  =  5- \dim_k \soc R.$
	\end{corollary}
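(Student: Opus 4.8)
The plan is to supply the reverse inequality to Proposition~\ref{bound}: since that result already gives $\omega(\Gamma_E(R))\le 5-\dim_k\soc R$, it suffices to produce one clique of size $5-\dim_k\soc R$ in $\Gamma_E(R)$. As $\ell(R)=5$ and $H_R(1)=3$, we have $\m^2=(l)\neq 0$ and $\m^3=0$, so Proposition~\ref{bilinear} gives a symmetric bilinear form $\phi$ on $\m/\m^2$ with $\phi(\bar a,\bar b)=0\iff ab=0$ for $a,b\in\m$. Set $r=\rank\phi$ (note $r\ge 1$ since $\m^2\neq 0$); then $\dim_k(\m/\m^2)^\perp=3-r$, so $\dim_k\soc R=4-r$ by Proposition~\ref{bilinear}(ii), whence $5-\dim_k\soc R=r+1$. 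Using $\cha k\neq 2$, Theorem~\ref{symbi}(ii) furnishes an orthogonal basis of $\m/\m^2$; after reordering, write it $\{\bar e_1,\bar e_2,\bar e_3\}$ with $\phi(\bar e_i,\bar e_i)\neq 0$ for $1\le i\le r$ and $\phi(\bar e_i,\bar e_i)=0$ for $i>r$ (these last $3-r$ vectors spanning the radical). Fix lifts $e_1,e_2,e_3\in\m$.

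The claim is that $C=\{[l]\}\cup\{[e_i]:1\le i\le r\}$ is a clique of size $r+1$ in $\Gamma_E(R)$. First, $[l]$ is adjacent to every vertex because $l\in\soc R$. For $1\le i<j\le r$ we have $\phi(\bar e_i,\bar e_j)=0$, so $e_ie_j=0$ by Proposition~\ref{bilinear}(i) and $[e_i]$ is adjacent to $[e_j]$; hence $C$ spans a complete subgraph, and all that remains is distinctness of its listed vertices. For $1\le i\le r$, $\bar e_i$ is non-isotropic, so $\bar e_i\notin(\m/\m^2)^\perp$ and therefore $e_i\notin\soc R$ by Proposition~\ref{bilinear}(ii); thus $\m e_i\subseteq\m^2$ is nonzero while $\m l=0$, so $\ann(e_i)\neq\m=\ann(l)$ and $[e_i]\neq[l]$.

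For distinctness of $[e_1],\dots,[e_r]$: no unit annihilates $e_i$ (as $e_i\neq 0$), and for $t\in\m$ one has $te_i=0\iff\phi(\bar t,\bar e_i)=0$ by Proposition~\ref{bilinear}(i); hence $\ann(e_i)$ is exactly the preimage of $\langle\bar e_i\rangle^\perp$ under $\m\to\m/\m^2$. In an orthogonal basis $\langle\bar e_i\rangle^\perp=\langle\bar e_k:k\neq i\rangle$, and for $i\neq j$ these two planes differ (the first omits $\bar e_i$, the second contains it), so $\ann(e_i)\neq\ann(e_j)$ and $[e_i]\neq[e_j]$. Therefore $|C|=r+1=5-\dim_k\soc R$, which together with Proposition~\ref{bound} gives equality. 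I do not anticipate a real obstacle here: the hypothesis $\cha k\neq 2$ is used only to invoke Theorem~\ref{symbi}(ii), and once an orthogonal basis is available the pairwise orthogonality, the non-isotropy bookkeeping, and the identification of $\ann(e_i)$ with the lift of $\langle\bar e_i\rangle^\perp$ are all routine; the only mild care needed is in separating $[e_i]$ from $[l]$ and from $[e_j]$, handled above.
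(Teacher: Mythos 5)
Your proposal is correct and follows essentially the same route as the paper: both invoke Proposition~\ref{bound} for the upper bound and then use $\cha k\neq 2$ via Theorem~\ref{symbi}(ii) to extract an orthogonal set of $r=\rank\phi$ non-isotropic vectors (a basis of a complement of the radical), which together with $[l]$ yields a clique of size $r+1=5-\dim_k\soc R$. The only cosmetic difference is how distinctness of the $[e_i]$ is checked — you compute $\ann(e_i)$ as the preimage of $\langle\bar e_i\rangle^{\perp}$, while the paper notes that $[e_i]=[e_j]$ would force $e_i^2=0$, contradicting non-isotropy — but both hinge on the same fact $\phi(\bar e_i,\bar e_i)\neq 0$.
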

	\begin{proof}
		By Proposition \ref{bound}, we have $\omega (\Gamma_E(R))  \leq   5- \dim_k \soc R.$
		Let $W$ be a complement of $(\m/\m^2)^\perp$ in $\m/\m^2,$ i.e., $\m/\m^2 \cong (\m/\m^2)^\perp \oplus W.$  Let $\dim W=s.$ Note that the bilinear form $\phi$ (given by Proposition \ref{bilinear}), restricted to $W$ is non-degenerate. Since $\cha k \neq 2,$  by Proposition \ref{symbi}(ii), there is an  orthogonal basis $\{ a_1, a_2, \ldots a_s\}$ for the subspace  $W.$ By Proposition \ref{bilinear}(i), $a_ia_j=0$ and since $\phi$ is non-degenerate on $W,$ we have $a_i^2 \neq 0$ for all $i.$ We next observe that the vertices $[a_i]$ are distinct, i.e., $ [a_i] \neq [a_j]$ for $ i \neq j.$ This is because if $[a_i]= [a_j],$  then $\ann(a_i )=\ann(a_j).$ Since $ a_i \in \ann(a_j) =\ann(a_i),$ we get $a_i^2=0$ contradicting $a_i^2 \neq 0.$ Hence $ [a_i] \neq [a_j]$ for $ i \neq j.$ As $a_ia_j=0,$ we have $ [a_i]$ is adjacent to $[a_j]$ in $\Gamma_E(R)$ for $ i \neq j.$ Hence $\{[a_1], \ldots, [a_s]\}$ forms a clique of size $ \dim W= s.$ Since the vertex $[l]$ is adjacent to all vertices in $\Gamma_E(R),$ we get a clique of size $s+1,$ namely, $\{[a_1], \ldots, [a_s], [l]\}.$ Now,
		\begin{eqnarray*}
			s+1 & = & \dim W+1 \\ &=& 3- (\dim \m/\m^2)^\perp +1 ~~ 
			( \text{since} ~  \m/\m^2 \cong (\m/\m^2)^\perp \oplus W)\\
			&=& 4 -(\dim \m/\m^2)^\perp\\
			&=& 5- \dim_k \soc R  ~~ \quad  \text{(by Proposition \ref{bilinear}(ii)).}
		\end{eqnarray*}
		Thus, we have a clique of size $5- \dim_k \soc R$ in $\Gamma_E(R),$ 
		proving the result.
	\end{proof}
	
	\begin{proposition}\label{bound1}
		Let $(R, \m, k)$ be an Artinian local ring of length $5$ and  Hilbert function $H_R(0)=1, H_R(1) =2, H_R(2)=2$ and $H_R(i) = 0 $ for $ i \geq 3. $ Then $\omega(\Gamma_E(R)) \leq 3.$  In fact, $\omega(\Gamma_E(R)) \leq 5- \dim_k \soc R.$
	\end{proposition}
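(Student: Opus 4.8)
The plan is to bound cliques of $\Gamma_E(R)$ by sending their vertices to one-dimensional subspaces of the two-dimensional space $\m/\m^2$, in the spirit of Proposition~\ref{bound}, but \emph{without} any bilinear form. Indeed, this Hilbert function forces $\m^3=0$ and $\dim_k\m^2=2$, so $\m^2$ is not principal and Proposition~\ref{bilinear} does not apply; the argument will instead run entirely through Lemma~\ref{socle}(ii) together with $\dim_k\m/\m^2=2$.

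First I would assemble the preliminaries. From $H_R(i)=0$ for $i\ge3$ and Nakayama one gets $\m^3=0$, hence $\m^2\subseteq(0:\m)=\soc R$, so $\dim_k\soc R\ge\dim_k\m^2=2$. Since $\soc R=\m$ would give $\m^2=0$, the inclusion $\soc R\subsetneq\m$ is strict, and the length identity $\ell(R)=\ell(R/\m)+\ell(\m/\soc R)+\ell(\soc R)$ used in Lemma~\ref{socle}(i) then forces $\dim_k\soc R\le3$. Thus $\dim_k\soc R\in\{2,3\}$, and it suffices to prove $\omega(\Gamma_E(R))\le 5-\dim_k\soc R$. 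The one fact used repeatedly is that every nonzero $t\in\soc R$ has $\ann(t)=\m$, so all such $t$ represent the same vertex of $\Gamma_E(R)$; consequently any clique of $\Gamma_E(R)$ contains at most one vertex $[t]$ with $t\in\soc R$.

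Next I would show $\omega(\Gamma_E(R))\le3$, which already settles the case $\dim_k\soc R=2$. Suppose a clique of size $4$ exists. By the previous remark, at least three of its vertices are $[v_1],[v_2],[v_3]$ with $v_i\notin\soc R$; since $\m^2\subseteq\soc R$ this gives $v_i\notin\m^2$, so $\bar v_i\ne0$ in $\m/\m^2$, and since $[v_1]\ne[v_2]$ Lemma~\ref{socle}(ii) gives $\langle\bar v_1\rangle\ne\langle\bar v_2\rangle$. As $\dim_k\m/\m^2=2$, the pair $\{\bar v_1,\bar v_2\}$ is a basis, so $\m=(v_1,v_2)$ by Nakayama; but then $v_3\m=(v_3v_1,v_3v_2)=0$, i.e.\ $v_3\in\soc R$, a contradiction.

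Finally I would treat $\dim_k\soc R=3$ and show $\omega(\Gamma_E(R))\le2$. Were there a clique of size $3$, at least two of its vertices would be $[x],[y]$ with $x,y\notin\soc R$, $xy=0$ and $[x]\ne[y]$; as above $\langle\bar x\rangle\ne\langle\bar y\rangle$, so $\m=(x,y)$ and $\m^2=(x^2,xy,y^2)=(x^2,y^2)$, whence $x^2,y^2$ are $k$-linearly independent (they generate the $2$-dimensional space $\m^2$). Now for any $s\in\soc R\subseteq\m$, write $s=ax+by$ with $a,b\in R$. Then $sx=ax^2+bxy=ax^2$; since $\m\cdot\m^2=0$, the space $\m^2$ is a $k$-vector space on which $a$ acts through its residue in $k$, so $0=sx$ and $x^2\ne0$ force $a\in\m$, and symmetrically $b\in\m$; hence $s\in\m^2$. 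Thus $\soc R=\m^2$ has dimension $2$, contradicting $\dim_k\soc R=3$, so no $3$-clique exists and $\omega(\Gamma_E(R))\le2=5-\dim_k\soc R$. Combining the two cases gives $\omega(\Gamma_E(R))\le 5-\dim_k\soc R$ in all cases. The crux — the only step that is not routine — is this last computation, which shows that a minimal generating pair $x,y$ of $\m$ with $xy=0$ collapses the socle onto $\m^2$; in fact it shows $\dim_k\soc R=2$ for \emph{every} ring with this Hilbert function, so the refined bound here is just $\omega(\Gamma_E(R))\le3$.
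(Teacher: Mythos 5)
Your proof is correct, and the main line (the bound $\omega(\Gamma_E(R))\le 3$) is essentially the paper's argument: discard the at most one socle vertex from a putative $4$-clique, use Lemma \ref{socle}(ii) to get distinct lines in the $2$-dimensional space $\m/\m^2$, apply Nakayama to two of them, and force the third element into $\soc R$. Where you diverge is the case $\dim_k\soc R=3$: the paper picks $s\in\soc R\setminus\m^2$ (which exists since $\dim_k\m^2=2$), notes $\langle\bar a\rangle\ne\langle\bar s\rangle$ for a non-socle clique element $a$, gets $\m=(a,s)$ by Nakayama, and concludes $b\in\soc R$ from $ba=bs=0$ — a two-line contradiction. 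You instead show that a minimal generating pair $x,y$ of $\m$ with $xy=0$ forces $\m^2=kx^2\oplus ky^2$ and then $\soc R=\m^2$, contradicting $\dim_k\soc R=3$. Both are valid; yours is slightly more computational but more informative, since it exposes that the $3$-dimensional-socle case is close to vacuous. One small caveat on your closing remark: your computation only yields $\dim_k\soc R=2$ for rings admitting a minimal generating pair with $xy=0$, which not every ring with this Hilbert function has (e.g.\ $k[X,Y]/(X^2,XY^2,Y^3)$ admits no such pair). The conclusion $\dim_k\soc R=2$ is nevertheless true in general, but it needs the separate observation that an $s\in\soc R\setminus\m^2$ would extend to a minimal generating set $\{s,t\}$ and make $\m^2=(t^2)$ principal, contradicting $\dim_k\m^2=2$; with that observation in hand one could in fact skip the $\dim_k\soc R=3$ case entirely.
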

	\begin{proof}
		Since $\m^2 \subseteq \soc R,$ there are only two cases to consider, namely, $\dim_k \soc R =2$ and $\dim_k \soc R=3.$  \vspace{0.15 cm} \newline 
		Case(i): $\dim_k \soc R =2.$ In this case, we show that $\omega(\Gamma_E(R)) \leq 3.$
		Suppose on the contrary that $\omega(\Gamma_E(R)) \geq 4.$ 
		As before  for any $s, t \in\soc R,$ both non-zero, we have $\ann(s) = \ann(t)=\m,$ i.e., $[s]=[t]$ in $\Gamma_E(R).$ Now discarding the vertex coming from $\soc R,$ if occurring in the clique of size $4,$ we are left with a subclique of three vertices $\{[a], [b], [c] \}$ such that $ a, b, c \notin \soc R.$ Since $\m^3 =0,$ by Lemma \ref{socle}(ii), the one dimensional subspaces $ \langle \bar a \rangle ,  \langle \bar b \rangle , \langle  \bar c \rangle $ are distinct. Now as $\dim_k \m/\m^2=2,$ we get $\m/\m^2 = \langle  \bar a, \bar b, \bar c \rangle .$ In fact, any two of $\bar a, \bar b, \bar c$ will form a basis of $\m/\m^2,$ say $\{\bar b, \bar c\}.$ By Nakayama lemma, we get $\m = (b, c).$ Since $[a]$ is adjacent to $[b]$ and $[c],$ we have $ab=ac=0$ giving us $ a \in 0 : \m =\soc R.$ This is a contradiction to the fact that $a \notin \soc R.$ Hence $\omega(\Gamma_E(R)) \leq 3.$\vspace{0.15 cm} \newline 
		Case (ii):  $\dim_k \soc R =3.$ In this case, we show that $\omega(\Gamma_E(R)) \leq 2.$ Note that in this case $ \m^2 \subsetneqq \soc R.$ So choose $ s\in \soc R \setminus \m^2.$ Now suppose on the contrary that  $\omega(\Gamma_E(R)) \geq 3.$ Again as in Case(i), discarding the vertex coming from $\soc R$ if occurring in the given clique,  we are left with two adjacent vertices $[a]$ and $[b]$ such that $a, b \notin \soc R.$ Adjacency of $[a]$ and $[b]$ gives $ab=0.$ Since $[a] \neq [s],$ by Lemma \ref{socle}(ii), it follows that the one-dimensional subspaces  $ \langle \bar a \rangle \neq   \langle \bar s \rangle $ in $\m/\m^2.$ Since $\dim_k \m/\m^2=2$ we get $\m/\m^2 = \langle \bar a, \bar s \rangle.$ By Nakayama lemma, $\m = (a, s).$ Since $ ba=bs=0,$ $b \in 0: \m = \soc R.$ This is a contradiction to $ b \notin \soc R.$ Hence $\omega(\Gamma_E(R)) \leq 2.$ Thus in both the cases, we have  $\omega(\Gamma_E(R)) \leq 5- \dim_k \soc R.$
		
	\end{proof}
	\begin{proposition}\label{bound2}
		Let $(R, \m, k)$ be an Artinian local ring of length $5$ and  Hilbert function $H_R(0)=1, H_R(1) =2, H_R(2)=1, H_R(3)=1$ and $H_R(i) = 0 $ for $ i \geq 4. $
		Then $\omega(\Gamma_E(R)) < \infty.$ In fact,  $\omega(\Gamma_E(R)) \leq 5- \dim_k \soc R.$
	\end{proposition}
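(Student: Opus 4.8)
The plan is to exploit the rigidity forced by the Hilbert function: $\m^2/\m^3$ and $\m^3$ are one–dimensional over $k$, and $\m^4=0$. First I would pin down the socle. Since $\m^3\subseteq\soc R$ and $\m^3\neq 0$ we get $\dim_k\soc R\geq 1$; and the chain $\m\supsetneq\m^2+\soc R\supsetneq\soc R$ is strict (the first because $\m=\m^2+\soc R$ would force $\m=\soc R$ by Nakayama, hence $\m^2=0$; the second because $\m^2\subseteq\soc R$ would force $\m^3=0$), so $\ell(\m/\soc R)\geq 2$, and from $\ell(R)=1+\ell(\m/\soc R)+\ell(\soc R)$ we obtain $\dim_k\soc R\leq 2$. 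Thus there are only the cases $\dim_k\soc R=1$ and $\dim_k\soc R=2$, and it suffices to prove $\omega(\Gamma_E(R))\leq 5-\dim_k\soc R$ in each.

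Next I would describe the vertices of $\Gamma_E(R)$. Fixing $w_3$ spanning $\m^3$, the multiplication $\m^2\times\m\to\m^3$ factors through $(\m^2/\m^3)\times(\m/\m^2)$, and as $\m^2/\m^3\cong k$ this amounts to a nonzero linear functional $\lambda\colon\m/\m^2\to k$ with $ct=\overline c\,\lambda(\bar t)\,w_3$ for $c\in\m^2$ and $t\in\m$ (here $\overline c$ is the class in $\m^2/\m^3\cong k$). From this one reads off: (a) every $0\neq z\in\soc R$ has $\ann z=\m$, so they give a single vertex $v_0$, adjacent to all others; (b) every $a\in\m^2\setminus\m^3$ has $\ann_R(a)=\{t\in\m:\lambda(\bar t)=0\}$, a fixed proper subideal of $\m$, so they give a single vertex $v_1\neq v_0$; (c) any remaining vertex is represented by some $a\in\m\setminus\m^2$ with $a\notin\soc R$ --- call these \emph{type~I}. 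Given a clique $C$, I would set $C'=C\setminus\{v_0,v_1\}$, so $|C|\leq 2+|C'|$, and reduce to bounding $|C'|$.

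The heart of the argument --- and the step I expect to be the main obstacle --- is to show that a clique contains \emph{at most two} type~I vertices, since $\m^3\neq 0$ makes Lemma~\ref{socle}(ii) unavailable and forces a hands-on annihilator computation. Suppose $[a_1],[a_2],[a_3]\in C'$ are distinct, with type~I representatives $a_i$. They must lie on a common line $L\subseteq\m/\m^2$: if $\bar a_1,\bar a_2$ were independent, Nakayama would give $\m=(a_1,a_2)$, hence $a_3\m=(a_3a_1,a_3a_2)=0$ and $a_3\in\soc R$, a contradiction. Scaling the $a_i$ by units (which changes neither the vertex $[a_i]$ nor the edges), I may assume $a_i=a_1+c_i$ with $c_i\in\m^2$ and $c_1=0$; expanding the relations $a_ia_j=0$ and using $\m^4=0$ forces $a_1^2=0$ and $a_1c_i=0$. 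Then, choosing $b\in\m$ with $\bar b\notin L$ so that $\m=(a_1,b)$, one computes $a_it=r_2(a_ib)$ for $t=r_1a_1+r_2b$ (with $r_1,r_2\in R$, using $a_1a_i=0$), whence $\ann(a_i)=\{r_1a_1+r_2b:r_2\in\ann_R(a_ib)\}$ (a description independent of the chosen representation of $t$). Since $a_ib-a_1b=c_ib\in\m^3\subseteq\soc R$ and $a_1b,a_ib\neq 0$ (because $a_i\notin\soc R$ and $a_i\m=(a_ib)$), and since adding a socle element does not change the annihilator of a nonzero element, $\ann_R(a_ib)=\ann_R(a_1b)$ for every $i$; hence $\ann(a_i)$ is independent of $i$, contradicting distinctness. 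This gives $|C'|\leq 2$, so $\omega(\Gamma_E(R))\leq 4$, finishing the case $\dim_k\soc R=1$.

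Finally, for $\dim_k\soc R=2$ I would sharpen this using the symmetric multiplication form $\beta\colon\m/\m^2\times\m/\m^2\to\m^2/\m^3$, $\beta(\bar a,\bar b)=\overline{ab}$, which is nonzero. Pick $s\in\soc R$ with $\bar s\neq 0$ (possible since $\soc R\cap\m^2=\m^3$ has dimension $1<2$); as $s\m=0$ we have $\bar s\in(\m/\m^2)^{\perp}$, so $\beta$ is degenerate, and being nonzero on a two-dimensional space it has rank $1$ with radical exactly $\langle\bar s\rangle$. If $[a],[b]\in C'$ were distinct, then (arguing as in (b) above, $\bar a\in\langle\bar s\rangle$ would force $[a]=v_1$) we get $\bar a,\bar b\notin\langle\bar s\rangle$, hence $\bar a^{\perp}=\langle\bar s\rangle$; but $ab=0$ gives $\beta(\bar a,\bar b)=0$, i.e.\ $\bar b\in\langle\bar s\rangle$, a contradiction. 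So $|C'|\leq 1$ and $\omega(\Gamma_E(R))\leq 3=5-\dim_k\soc R$, completing the proof.
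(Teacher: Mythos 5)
Your proof is correct, and at its core it follows the same strategy as the paper's: the nonzero elements of $\soc R$ and of $\m^2\setminus\m^3$ each contribute a single vertex, three remaining clique vertices cannot be linearly independent modulo $\m^2$ by Nakayama, and the collinear configuration is ruled out by a direct annihilator computation exploiting $\m^4=0$ and $\m^3\subseteq \soc R$. The differences are organizational but worth recording. You establish up front that $\dim_k \soc R\leq 2$ (the paper only uses $\dim_k\soc R\geq 2$ in its second case), and your handling of the collinear case is tidier than the paper's Claim 1: after normalizing $a_i=a_1+c_i$ you extract $a_1^2=a_1c_i=0$ from the three clique relations and then describe $\ann(a_i)=Ra_1+\{rb: r\in\ann(a_ib)\}$ in closed form, so that $\ann(a_ib)=\ann(a_1b)$ (a socle perturbation of a nonzero element does not change its annihilator) finishes the argument in one stroke; the paper instead picks $x\in\ann(a)\setminus\ann(b)$ and chases it to a contradiction. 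In the case $\dim_k\soc R=2$ you repackage the paper's Claim 3/Claim 4 as a rank-one symmetric form $\m/\m^2\times\m/\m^2\to\m^2/\m^3$ with radical $\langle \bar s\rangle$, which nicely extends the bilinear-form viewpoint of Proposition \ref{bilinear} to this Hilbert function, though the underlying computation ($\m=(a,s)$ and $ba=bs=0$ force $b\in\soc R$) is the same; your bound of $3$ there is weaker than the paper's $\omega(\Gamma_E(R))\leq 2$ but still meets the stated bound $5-\dim_k\soc R$.
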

	\begin{proof}
		Since $H_R(2)=H_R(3)=1,$ we have  $\m^2 =(l)$ and $\m^3= (k)$ with $ l, k \neq 0$ in $R.$ We  first observe that $ l \notin \soc R.$ To do this, suppose $ l \in \soc R.$ Since $\m^3 = (k) \subseteq \m^2=(l),$ we obtain $ k = rl$ for some $r \in R.$ If $ r \in \m,$ then as $ l \in \soc R,$ we have $k=rl =0,$ a contradiction to $ k \neq 0.$ Hence $ r \notin \m,$ i.e., $r$ is a unit. But then, $l= r^{-1}k$ giving us $ \m^2=\m^3.$ By Nakayama lemma this $\m^2 =0,$  contradicting $l \neq 0.$ This shows that $l \notin \soc R$ and so $[l] \neq [k].$ To complete the proof we make two cases, namely, $\dim_k \soc R =1$ and $\dim_k \soc R \geq 2.$  \vspace{0.15 cm} \newline 
		{\bf Case(i):} $\dim_k \soc R =1.$  In this case, we have  $ \soc R = \m^3 = (k).$  Notice that elements of $\m^2$ and $\m^3$ contribute to only two vertices namely, $[l]$ and $[k]$ in $\Gamma_E(R).$ We first claim that,  \vspace{0.1 cm} \newline
		\noindent \underline{ Claim 1}: If $ \{ [a], [b], [c]\}$ is a clique with  $ a, b, c \in \m \setminus \m^2,$ then   $\dim_k \langle  \bar a, \bar b, \bar c \rangle  =2.$ \vspace{0.15 cm} \newline 
		Note that since $\soc R =(k) \subseteq \m^2 $ and $ a, b, c \notin \m^2,$ we have,  $ a, b, c \notin \soc R.$ As $H_R(1)=2$ (i.e. $\dim_k \m/\m^2 =2 $),  $\dim_k \langle  \bar a, \bar b, \bar c \rangle  \leq 2.$ Suppose on the contrary that  $\dim_k \langle  \bar a, \bar b, \bar c \rangle  =1.$ So we obtain $\bar a= \bar u_1 \bar b$ in $\m/\m^2$ and $\bar a = \bar u_1 \bar c$ for $ \bar u_1, \bar u_2$ non-zero in $k.$ Hence $a-u_1b = r_1 l$ and $a-u_2c = r_2 l$ for some $r_1, r_2 \in R$ and units $u_1, u_2 \in R.$   \newline  We now show that $r_1$ and $r_2$ are units in $R.$ Suppose on the contrary that $r_1 \in \m.$ Then  $r_1l \in \m^2=(k).$ Thus $r_1l = ck$ for some $c \in R$ and  we have $a-u_1b = ck.$ We now observe that $\ann(a) = \ann(b)$ as follows. Let $ x \in \ann(a).$ Hence $ xa - xu_1b = xk.$ Since $ k \in \soc R,$ $x k =0$ and $ x \in \ann(a)$ gives $ax=0.$  Hence $ xu_1 b=0.$ As $ u_1$ is a unit in $R,$  $xb =0,$ showing that $ x \in \ann(b).$ Hence $ \ann(a) \subseteq \ann(b).$ Argument for $ \ann(b) \subseteq \ann(a)$ is similar. This proves $ \ann(a) = \ann(b),$ i.e., $[a] =[b].$ This is a contradiction to the distinctness of $[a]$ and $[b].$ Thus $r_1$ is a unit in $R.$ Similarly, we can show that $r_2$ is a unit. \newline 
		\indent Now, multiplying the two equations $a-u_1b = r_1 l$ and $a-u_2c = r_2 l$ together and noting that $ l^2 \in  \m^4=(0)$ we obtain, \[(a-u_1b)  (a-u_2c) = r_1r_2 l^2=0.\]
		Since $ \{ [a], [b], [c]\}$ is a clique, we have $ab=bc = ac =0.$ This gives $a^2=0.$ We can similarly have $ b^2=c^2=0.$ Multiplying  $a-u_1b = r_1 l$ by $a$ on both sides gives us $ar_1l=0.$ As $r_1$ is a unit, we get $al=0.$ Similarly,  $ bl=cl=0.$ Now  $[a] \neq [b] $ gives $\ann(a) \neq \ann(b).$ Hence there exists $ x \in \ann(a) \setminus \ann(b) $ (interchange the roles of $a$ and $b$ if necessary). Note that $ x \notin \m^2.$ This is because if $ x \in \m^2 =(l),$ then as $bl=0$ we get $x \in \ann(b),$ a contradiction to $x \notin \ann(b).$ We observe  the following.\vspace{0.1 cm} \newline
		(i) $ b\in \ann(a) \setminus \ann(x) \implies \ann(x) \neq \ann(a),$ i.e., $[x] \neq [a].$ \\
		(ii) $b \in \ann(b) \setminus  \ann(x) \implies \ann(x) \neq \ann(b),$ i.e., $[x] \neq [b].$ \\
		(iii)   $\bar x \neq 0 $ in $\m/\m^2 $ (since $ x \notin \m^2$). \vspace{0.1 cm} \newline
		We now show that $\bar x \in \langle \bar a \rangle .$  Suppose if $ \bar x \notin \langle  \bar a \rangle, $ then as $\dim \m/\m^2 =2,$ we see that $\{\bar x, \bar a\}$ forms a basis for $\m/\m^2.$ By Nakayama lemma, $\m = (x, a).$ But since $ ax= a^2 =0,$ we obtain $ a\in (0: \m) =\soc R.$ This contradicts our choice of $a \notin \soc R$ and hence $\bar x \in  \langle  \bar a \rangle .$ Thus $ x-ua =rl$ for some $u, r \in R.$ Multiplying both sides by $b,$ we get $bx= rbl.$ As noted above $bl=0,$ so  $bx=0.$ This is a contradiction to our choice of $ x \notin \ann(b).$ This contradiction shows that $\dim  \langle  \bar a, \bar b, \bar c \rangle =2,$ i.e., whenever we have a clique of size $3,$ say $ \{ [a], [b], [c]\},$ of the elements of $\m \setminus \m^2,$ then $ \langle \bar a, \bar b, \bar c \rangle  = \m/\m^2.$   We now claim that,\vspace{0.1 cm} \newline
		\underline{Claim 2}: $\omega(\Gamma_E(R)) \leq 4.$ \vspace{0.1 cm} \newline
		Suppose on the contrary that $\omega(\Gamma_E(R)) \geq  5,$ i.e., suppose there is a clique of size $5$ in $\Gamma_E(R).$ Discarding the two elements $[l]$ and $[k]$ coming from $\m^2$ and $\m^3$, if occurring in the clique of size $5,$ we are left with a clique of size $3,$ say $\{[a_1], [a_2], [a_3]\}$ such that $ a_i \in \m \setminus \m^2.$ By Claim 1,   we have $\dim \langle  \bar a_1, \bar a_2, \bar a_3 \rangle  = 2, $ i.e.,  $\m/\m^2 = \langle  \bar a_1, \bar a_2, \bar a_3 \rangle .$ So  (renaming if necessary) we have, $\m/\m^2 = \langle  \bar a_1, \bar a_2 \rangle .$  By Nakayama lemma we get $\m= (a_1, a_2).$ Now  $a_3 a_1=  a_3 a_2=0 \implies  a_3 \in 0 : \m =\soc R.$  Since $\soc R \subseteq \m^2,$ we get $a_3 \in \m^2.$ This is a contradiction to our choice of $a_3 \notin \m^2.$ Thus $\omega(\Gamma_E(R)) \leq 4.$\vspace{0.1 cm} \newline
		{\bf Case(ii):} We now consider the case when $\dim_k \soc R \geq 2.$ \vspace{0.1 cm} \newline We first  check that $\soc R \nsubseteq \m^2.$ Suppose on the contrary $\soc R \subseteq \m^2=(l).$ As noticed in the beginning of the proof, we have $ l \notin \soc R,$  so the inclusion $\soc R \subsetneqq \m^2$ is strict.   Note that \[ \ell(\m^3) =H_R(3) =1 < \ell(\soc R)= \dim_k(\soc R) = 2.\] Since $ (k)=\m^3 \subseteq \soc R, $ the inclusion $\m^3 \subsetneqq \soc R $ is strict. Thus we have $\m^3 \subsetneqq \soc R \subsetneqq \m^2.$  Hence $ \ell(\soc R/\m^3) \geq 1$ and $(\m^2/\soc R) \geq 1.$ Consider the following short exact sequence
		\[ 0 \rightarrow \soc R/\m^3 \rightarrow \m^2/\m^3 \rightarrow \m^2/\soc R \rightarrow 0. \]
		Now, computing the lengths from the short exact sequence,  we get $$ \ell(\m^2/\m^3) = \ell(\soc R/\m^3 ) + \ell(\m^2/\soc R ).$$ As noted above, $ \ell(\soc R/\m^3) \geq 1$ and $(\m^2/\soc R) \geq 1,$ it follows that  $ \ell(\m^2/\m^3) \geq 2.$ This is a contradiction to $\dim_k(\m^2/\m^3) =H_R(2)= 1.$ This shows that $\soc R \nsubseteq \m^2.$
		Hence there exists  $s \in \soc R $ such that  $ s\notin \m^2.$ Since $ s\notin \m^2$, $ \langle \bar s \rangle $ is a one dimensional subspace of  $\m/\m^2.$  We now claim, \vspace{0.1 cm} \newline
		\underline{Claim 3}: For  any $ x\in \m $ with $ \langle  \bar x \rangle  = \langle \bar s \rangle, $ we have either  $[x] = [k]$ or $[x]=[l].$ \vspace{0.1 cm} \newline To prove this claim, note that $ x - us = rl $ for some $ u, r \in R,$ where $u$ is a unit. We now have two possibilities either $ r \in \m$ or $ r \notin \m.$ If $ r \in \m,$ then $rl \in \m^3=(k)$ and so $ rl = ck$ for some $c \in R.$ Hence $x- us = ck.$ Since $ s, k \in \soc R,$ we get $ x \in \soc R$ and hence $ [x]=[k]$ in this case. If $ r \notin \m,$ then $r$ is a unit. Then in this case, we show that $ \ann(x) = \ann(l).$ For this, we consider $ p \in \ann(x)$ and multiply both sides of the equation $ x - us = rl $ by $p.$ As $ s \in \soc R$, $ps =0.$ This gives  us $prl =0.$ Now, as $r$ is a unit, we get $ pl=0.$ Hence $ p \in \ann(l)$ proving that $ \ann(x) \subseteq \ann(l).$ Similarly, we have $ \ann(l) \subseteq \ann(x).$ Therefore, when $ r \notin \m,$ we get $ \ann(x) =\ann(l),$ i.e., $[x] =[l].$
		This proves Claim 3.\vspace{0.1 cm} \newline 
		\underline{Claim 4}: $ \omega ( \Gamma_E(R)) \leq 2.$ \vspace{0.1 cm} \newline 
		We show that if $[a] \neq [b]$ in $\Gamma_E(R)$ are such that $[a] \neq [l], [k]$ and $[b] \neq [l], [k],$  then  $[a]$ can not be adjacent to $[b].$  Suppose on the contrary such $[a]$ and  $[b]$ are adjacent, i.e., $ab=0.$ Then by Claim 3, we obtain $ \langle  \bar a \rangle  \neq \langle  \bar s \rangle $ and   $\langle  \bar b \rangle  \neq \langle  \bar s \rangle .$  Hence, $\dim \langle  \bar a, \bar s \rangle =2.$ This gives us $\m/\m^2 =\langle  \bar a, \bar s \rangle.$ By  Nakayama lemma, we get $\m = (a, s).$ Now $ba =bs =0$ gives $ b \m =0,$ i.e., $b \in 0: \m = \soc R.$ Thus $[b] =[k]$  a contradiction to the fact that $[b] \neq [k].$ \\
		\indent  Now, suppose on the contrary that  $ \omega ( \Gamma_E(R)) \geq 3.$ By the observation above, the only possibility for a clique of size $3$ is $\{[l], [k], [c] \}$ for some $ c\in \m.$ Hence $lk=lc=kc=0.$  By Claim 3 again, we have $\langle  \bar c \rangle  \neq \langle  \bar s \rangle .$  Using Nakayama lemma as before we find that $\m= (c, s).$ As $ lc = ls=0,$ it follows that $ l \in \soc R.$ This contradicts the fact that $ l \notin \soc R$ mentioned in the beginning of the proof. Thus  $ \omega ( \Gamma_E(R)) \leq 2.$ \newline In both the cases (i) and (ii), we observe that $\omega(\Gamma_E(R)) \leq 5- \dim_k \soc R.$
	\end{proof}
	
	With this preparation, we are now ready to prove the main result of this paper. 
	
	\begin{theorem} \label{main} Let $R$ be a ring of length $5.$ Then $ \omega(\Gamma_E(R)) < \infty.$ Moreover, if in addition $(R, \m, k)$ a local ring then we have
		\[ \omega(\Gamma_E(R)) \leq 5- \dim_k \soc R. \]	
	\end{theorem}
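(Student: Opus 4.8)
The plan is to reduce the general Artinian case to the local case, and then invoke the case-by-case analysis already carried out in Propositions \ref{bound}, \ref{bound1}, and \ref{bound2}. For the reduction, note that an Artinian ring $R$ is a finite product $R \cong R_1 \times R_2 \times \cdots \times R_t$ of Artinian local rings $(R_i, \m_i, k_i)$, with $\ell(R) = \sum_{i=1}^t \ell(R_i)$. The key combinatorial observation is that if $t \geq 2$, then the idempotents give rise to a bounded-size structure: every nonzero zero-divisor class $[a]$ of $R$ is determined by the tuple of annihilators of its components, and $\Gamma_E(R)$ can be controlled in terms of the $\Gamma_E(R_i)$ together with the ``block'' structure coming from the product decomposition. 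Concretely, for $a = (a_1, \dots, a_t)$ with $ab = 0$ one needs $a_i b_i = 0$ for each $i$; so a clique in $\Gamma_E(R)$ projects, in each coordinate, either to a clique in $\Gamma_E(R_i)$, or to a set of elements all of which are zero, or all of which are units in $R_i$ — and in the latter two cases that coordinate imposes no constraint or forces the other coordinate to be zero. Since $\ell(R)=5$ and $t \geq 2$, each factor has length $\leq 4$; by the results of \cite{Coy} (or the short proof given above for length $4$, together with the trivial length $\leq 3$ cases), each $\omega(\Gamma_E(R_i)) < \infty$, and a counting argument bounds $\omega(\Gamma_E(R))$ by a function of the finitely many $\omega(\Gamma_E(R_i))$ and $t$. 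This settles $\omega(\Gamma_E(R)) < \infty$ whenever $R$ is not local.

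If $(R, \m, k)$ is local of length $5$, then by Remark \ref{length}(ii) and Observation \ref{obs} the Hilbert function of $R$ is one of the four listed types. In case (i) of Observation \ref{obs}, $R$ is a principal ideal ring, $\m$ is principal, and $\Gamma_E(R)$ has at most the four vertices $[\m^i]$ for $i = 1, 2, 3$ (together with noting $\m^4$ contributes nothing new since $\m^5 = 0$ forces $\m^4 = \soc R$), so $\omega(\Gamma_E(R)) \leq 4$; and since $R$ is Gorenstein here (a PIR quotient has $\dim_k \soc R = 1$), the bound $\omega(\Gamma_E(R)) \leq 5 - \dim_k \soc R = 4$ holds. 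In case (ii), $H_R(1) = 3$, and Proposition \ref{bound} gives exactly $\omega(\Gamma_E(R)) \leq 5 - \dim_k \soc R$. In case (iii), $H_R(0)=1, H_R(1)=2, H_R(2)=2$, and Proposition \ref{bound1} gives $\omega(\Gamma_E(R)) \leq 3 \leq 5 - \dim_k \soc R$. In case (iv), $H_R(0)=1, H_R(1)=2, H_R(2)=1, H_R(3)=1$, and Proposition \ref{bound2} gives the same bound. Taking the maximum over the four cases yields $\omega(\Gamma_E(R)) \leq 5 - \dim_k \soc R$ in all local cases, and in particular $\omega(\Gamma_E(R)) < \infty$.

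I expect the main obstacle to be the non-local reduction step: one must argue carefully that a clique in $\Gamma_E(R_1 \times \cdots \times R_t)$ cannot be made arbitrarily large by exploiting the interplay between coordinates. The subtlety is that a single vertex class $[a]$ with $a$ having some zero coordinates and some unit coordinates behaves like a ``universal'' vertex adjacent to a whole family, so one cannot naively multiply the bounds $\omega(\Gamma_E(R_i))$ together. The clean way to handle this is to observe that for $a, b$ nonzero zero-divisors, the support patterns (the sets of coordinates where a component is zero, a unit, or a proper zero-divisor) are drawn from a finite set, and within each fixed pattern the adjacency is governed coordinatewise by the finite graphs $\Gamma_E(R_i)$; a Ramsey-type or direct pigeonhole argument then bounds the clique number. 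Since each $R_i$ has length at most $4$ and hence finite $\omega(\Gamma_E(R_i))$, this finishes the argument. The local cases are then immediate given the propositions already established.
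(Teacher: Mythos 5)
Your proposal is correct and follows essentially the same route as the paper: decompose $R$ into a finite product of Artinian local rings, dispose of the non-local case using the factors of length $\leq 4$, and in the local case run through the four possible Hilbert functions via Propositions \ref{bound}, \ref{bound1}, \ref{bound2} and the principal ideal ring case. The only difference is that the paper handles the product step by citing \cite[Propositions 5.1, 5.3, 5.8]{Coy}, whereas you sketch the coordinatewise clique-counting argument directly; that sketch is sound (a clique injects into tuples of coordinate annihilator classes, each coordinate contributing at most $\omega(\Gamma_E(R_i))+2$ values), though note the graphs $\Gamma_E(R_i)$ themselves need not be finite --- only their clique numbers are.
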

	\begin{proof}
		Writing $R$ as a finite direct product of Artinian local rings, we have \[ R \cong \displaystyle \bigoplus_{i=1}^s R_i\] with $R_i$  Artinian local rings. Note that $ \ell(R) = \displaystyle \sum_{i=1}^{s} \ell(R_i)$	and  $\ell(R_i) \geq 1$ for all $i.$ Observe that if $ s \geq 2,$ then $\ell(R_i) \leq 4$ for all $i.$ In this case, it follows from \cite[Proposition 5.3]{Coy} and \cite [Proposition 5.8]{Coy} that $ \omega(\Gamma_E(R_i)) < \infty$ for all $i.$ Now by   \cite[Proposition 5.1]{Coy}, we  get $ \omega(\Gamma_E(R)) < \infty.$ If $s=1,$ then $R$ is Artinian local ring of length $5.$ We denote $R$ by $(R, \m, k).$ Then by Observation  \ref{obs}, we have the  following possibilities for the Hilbert functions of $R,$
		\begin{enumerate}
			\item [\rm (i)] $ H_R(i)= 1$ for $ 0 \leq i \leq 4;$ 	
			\item [\rm (ii)]  $ H_R(0)= 1, ~ H_R(1)=3, ~ H_R(2)=1;$
			\item [\rm (iii)] $ H_R(0)= 1,~  H_R(1)=2, ~ H_R(2)=2;$
			\item [\rm (iv)] $ H_R(0)= 1, ~ H_R(1)=2, ~ H_R(2)=1, ~ H_R(3)=1.$
		\end{enumerate}
		As noted in the Observation \ref{obs}, when $R$ has a Hilbert function  of case (i), then $R$ is a principal ideal ring.  So $\m =(a), \m^2=(b), \m^3=(c)$ and $\m^4=(0).$ This gives us $V(\Gamma_E(R)) = \{ [a], [b], [c] \},$ i.e., $V(\Gamma_E(R))$ is a finite graph. Hence, in this case, we have $ \omega(\Gamma_E(R)) \leq 3.$ In the  case (ii), it follows from  Proposition \ref{bound} that $ \omega(\Gamma_E(R)) \leq 5- \dim_k \soc R.$ The conclusion in the case (iii) follows from Proposition \ref{bound1}, while that in case (iv) follows from Proposition \ref{bound2}. Thus $ \omega(\Gamma_E(R)) \leq 5- \dim_k \soc R $ in all the four cases (i) to (iv). This proves the result. 	 
	\end{proof}
	
	\section{\bf Acknowledgments}
	It is a pleasure to thank  Vinayak Joshi for many useful discussions on the subject of this paper. The author would like to thank DST-SERB for financial assistance under the project ECR/2017/00790. 
	
	\bibliographystyle{amsplain}
	\bibliography{ref}
\end{document}